\newtheorem{definition}{Definition}[section]
\newtheorem{proposition}[definition]{Proposition}
\renewcommand{\max}{\textnormal{max}}
\newcommand{\R}{\mathbb{R}}
\newcommand{\vPi}{\boldsymbol{\Pi}}
\newcommand{\vepsilon}{\boldsymbol{\epsilon}}
\newcommand{\vmu}{\boldsymbol{\mu}}
\newcommand{\veta}{\boldsymbol{\eta}}
\newcommand{\vGamma}{\boldsymbol{\Gamma}}
\newcommand{\vtheta}{\boldsymbol{\theta}}
\newcommand{\vTheta}{\boldsymbol{\Theta}}
\newcommand{\vPsi}{\boldsymbol{\Psi}}
\newcommand{\vomega}{\boldsymbol{\omega}}
\newcommand{\vOmega}{\boldsymbol{\Omega}}
\newcommand{\vSigma}{\boldsymbol{\Sigma}}
\newcommand{\vb}{\mathbf{b}}
\newcommand{\vd}{\mathbf{d}}
\newcommand{\ve}{\mathbf{e}}
\newcommand{\vr}{\mathbf{r}}
\newcommand{\vs}{\mathbf{s}}
\newcommand{\vu}{\mathbf{u}}
\newcommand{\vv}{\mathbf{v}}
\newcommand{\vw}{\mathbf{w}}
\newcommand{\vx}{\mathbf{x}}
\newcommand{\vy}{\mathbf{y}}
\newcommand{\vzero}{\mathbf{0}}
\newcommand{\vA}{\mathbf{A}}
\newcommand{\vB}{\mathbf{B}}
\newcommand{\vH}{\mathbf{H}}
\newcommand{\vI}{\mathbf{I}}
\newcommand{\vK}{\mathbf{K}}
\newcommand{\vM}{\mathbf{M}}
\newcommand{\vN}{\mathbf{N}}
\newcommand{\vP}{\mathbf{P}}
\newcommand{\vQ}{\mathbf{Q}}
\newcommand{\vR}{\mathbf{R}}
\newcommand{\vT}{\mathbf{T}}
\newcommand{\vU}{\mathbf{U}}
\newcommand{\vV}{\mathbf{V}}
\newcommand{\vX}{\mathbf{X}}
\newcommand{\vY}{\mathbf{Y}}
\newcommand{\vZ}{\mathbf{Z}}
\newcommand{\Id}{\mathbf{I}}
\newcommand{\calF}{\mathcal{F}}
\newcommand{\calH}{\mathcal{H}}
\newcommand{\calK}{\mathcal{K}}
\newcommand{\calM}{\mathcal{M}}
\newcommand{\calN}{\mathcal{N}}
\newcommand{\calT}{\mathcal{T}}
\newcommand{\calX}{\mathcal{X}}
\newcommand{\rank}{\mathsf{rank}}
\newcommand{\trace}{\mathsf{tr}}
\newcommand{\diag}{\textnormal{diag}}
\newcommand{\Argmin}{\textrm{Argmin}}
\newcommand{\argmin}{\textrm{argmin}}
\newcommand{\expect}{\mathbb{E}}
\newcommand{\logdet}{\mathsf{logdet}}
\renewcommand{\det}{\mathsf{det}}
\newcommand{\lrp}[1]{\left( #1  \right)}
\newcommand{\lrb}[1]{\left[ #1  \right]}
\newcommand{\inn}[2]{\left\langle #1 , #2 \right\rangle}
\newcommand{\norm}[1]{\left\| #1  \right\|}
\newcommand{\vertiii}[1]{{\left\vert\kern-0.25ex\left\vert\kern-0.25ex\left\vert #1 \right\vert\kern-0.25ex\right\vert\kern-0.25ex\right\vert}}
\newcommand{\mc}[1]{\mathcal{#1}}
\begin{document}

\title{Efficient iterative methods for hyperparameter estimation in large-scale linear inverse problems}
\author{ Khalil A Hall-Hooper, Arvind K Saibaba, Julianne Chung, and
Scot M Miller}
\maketitle

\abstract{We study Bayesian methods for large-scale linear inverse problems, focusing on the challenging task of hyperparameter estimation. Typical hierarchical Bayesian formulations that follow a Markov Chain Monte Carlo approach are possible for small problems with very few hyperparameters but are not computationally feasible for problems with a very large number of unknown parameters. In this work, we describe an empirical Bayes (EB) method to estimate hyperparameters that maximize the marginal posterior, i.e., the probability density of the hyperparameters conditioned on the data and then we use the estimated values to compute the posterior of the inverse parameters. For problems where the computation of the square root and inverse of prior covariance matrices are not feasible, we describe an approach based on the generalized Golub-Kahan bidiagonalization to approximate the marginal posterior and seek hyperparameters that minimize the approximate marginal posterior. Numerical results from seismic and atmospheric tomography demonstrate the accuracy, robustness, and potential benefits of the proposed approach.}

\maketitle

\section{Introduction}
\label{sec::introduction}

Inverse problems arise in many important applications, where the main goal is to infer some unknown parameters from given observed data. Bayesian approaches provide a robust framework for tackling such problems, where the posterior distribution of the unknown parameters combines information provided by the observations (e.g., the likelihood function) and prior knowledge about the unknown parameters (e.g., the prior).  However, for problems where the prior and/or the likelihood contain unspecified hyperparameters, \textit{hierarchical} Bayesian approaches can be used such that all parameters (including the inverse parameters and the hyperparameters) can be inferred from the data in a ``fully Bayesian'' framework.

There are several challenges, especially for large-scale inverse problems where the number of unknowns and the size of the observed data sets are large. One of the key challenges is model specification, whereby appropriate prior distributions (including hyperpriors) and hyperparameters for each level of the hierarchy must be specified. It is important to select suitable prior distributions that capture the characteristics of the problem accurately, but finding informative and realistic priors becomes more difficult as the complexity and size of the problem increase.  Another key challenge of a fully Bayesian framework is that even for linear-Gaussian inverse problems, where the forward model is linear and both the prior and error distributions are Gaussian, the posterior distribution may no longer be Gaussian, making sampling approaches significantly more challenging.  These challenges are compounded by the need to perform multiple inference runs for multiple parameters, each of which requires an accurate and efficient solver for the inverse problem.

To circumvent some of the challenges of fully Bayesian approaches, empirical Bayes (EB) approaches can be used that first estimate the hyperparameters by maximizing a marginal likelihood function, and then use the estimated values to compute the posterior of the inversion parameters.  However, optimization for hyperparameters remains a challenging and expensive task (e.g., due to multiple costly objective function and gradient evaluations, each of which requires accurate inference), and much less attention in the literature has been given to estimating hyperparameters that govern prior distributions.

We focus on large-scale linear-Gaussian inverse problems, where the forward and adjoint models as well as the prior covariance matrix are only accessible via matrix-vector multiplications.  This circumstance may also arise from covariance kernels that are defined on irregular grids or come from a dictionary collection. Covariance kernels from the Mat{\' e}rn class will be considered, although various classes of parameterized covariance kernels could be considered in this framework. For such scenarios, generalized Golub-Kahan (genGK) iterative methods have been proposed for inference \cite{AKS_Chung_genHyBR}, but the estimation of the hyperparameters is much more challenging. 

\paragraph{Overview of Main Contributions.} In this paper, we exploit Krylov subspace projections for hyperparameter estimation using empirical Bayes methods. The main contributions are as follows:
\begin{enumerate}
\item We propose an approximation to the marginal posterior objective function and its gradient using the genGK bidiagonalization. The genGK approach only requires matrix-vector products (matvecs) involving the forward and adjoint operators, as well as with the prior covariance matrix.  However, it doesn't require applying square roots and inverses.  
\item We provide a detailed analysis of the computational costs and the resulting error for the approximations.  These approximations and theoretical bounds can be used for other tasks related to Bayesian inverse problems.
\item We develop methods for monitoring the accuracy of the resulting approximations and analysis for trace estimators for nonsymmetric matrices.
\item While our proposed approach is general, we show how to significantly accelerate the estimation of hyperparameters when the noise and prior variance are assumed to be unknown. This special case is important in many applications.
\end{enumerate}

\paragraph{Other applications of our approach.}

{Although we focus on the empirical Bayes approach, there are numerous scenarios in large-scale Bayesian inverse problems where the approximations described in this work can be used to make computational advances.  For example,}

\begin{enumerate}
\item \textbf{Fully-Bayesian approach}: 
Our approach provides efficient ways to approximate the objective function (obtained by the negative log-likelihood of the marginal posterior distribution) and its gradient using the genGK approaches. However, it is easy to see that the approaches can be easily extended to approximate the likelihood $\pi(\boldsymbol{\theta}|\vb)$. In particular, these approximations can be used to significantly accelerate Markov Chain Monte Carlo (MCMC) approaches for exploring the marginal posterior distribution and providing uncertainty estimates for the hyperparameters. 

\item \textbf{Variational Bayes approach}: In this approach, we approximate the marginal posterior distribution by minimizing the distance to another class of distributions (e.g., Gaussian). As with the empirical Bayes approach, the optimization techniques also require repeated evaluations of the objective function and its gradient. Therefore, the computational techniques developed here are also applicable to that setting.
\item \textbf{Information-theoretic approaches}: An alternative approach to estimating the hyperparameters is to optimize the information gain from the prior distribution to the posterior. One such metric is the expected Kullback-Liebler divergence, also known as the D-optimal design criterion, which has a similar form as our objective function (in fact, it only involves the log-determinant term). Therefore, the approximation techniques developed here could also be applied to estimate hyperparameters using information theoretic criteria. 
\end{enumerate}

\subsection{Related work}
Hyperparameter estimation in hierarchical Bayesian problems is an important problem that has received some attention. In the geostatistical community, an approach for estimating hyperparameters is the maximum likelihood and restricted maximum likelihood approaches~\cite{kitanidis1983geostatistical,kitanidis1995quasi,michalak2005maximum}. However, these approaches are not scalable to large-scale problems and are different from ours since we use the MAP estimate of the marginalized posterior.

In contrast to optimization-based techniques, sampling methods based on MCMC have been used to estimate the uncertainty in the hyperparameters (see, e.g.,~\cite{bardsley2018computational}). However, this approach is considerably more expensive since it requires computing the likelihood thousands of times to get reliable samples from the posterior distribution.  Another approach to estimating hyperparameters is using the semivariogram technique~\cite{brown2020semivariogram}, but this is only applicable to estimating the prior hyperparameters.

Our work is close in spirit to~\cite{ApproxEB_1} who also use a low-rank approximation to accelerate the computation of the objective function based on a marginal likelihood; to find the optimal parameters, they evaluated it over a finite grid of hyperparameters. Our approach instead uses the marginal posterior to construct the objective function and uses the genGK iterative method to approximate it. Furthermore, we show how to approximate the derivatives using the genGK approach and use it in a continuous optimization approach. 

In previous work, we developed a solver called genHyBR, also based on genGK, for Bayesian inverse problems~\cite{AKS_Chung_genHyBR} but it was only capable of estimating a limited set of hyperparameters (associated with the prior). The authors in~\cite{majumder2022kryging} also use the genGK approach in Gaussian processes, similar to ours, to estimate hyperparameters but use the profile likelihood which is not a Bayesian approach like ours. 

\subsection{Outline of the paper}
The paper is organized as follows. In Section \ref{sec::notation_problemSetup}, we provide a brief overview of hyperparameter estimation, from a hierarchical Bayes formulation to an empirical Bayes approach. We describe the generalized Golub-Kahan bidiagonalization process, which will be used in Section \ref{sec:estimation}.  That is, we use elements from the genGK bidiagonalization to approximate the EB objective function and gradient for optimization and provide an error analysis for approximations that can be used to monitor the accuracy of approximations. 
We demonstrate the performance of our approach on model inverse problems in heat conduction, seismic and atmospheric tomography in Section \ref{sec:numerics}, and conclusions and future work are provided in Section \ref{sec:conclusions}. Theoretical derivations have been relegated to the Appendix.

\section{Background and Problem Setup}
\label{sec::notation_problemSetup}

\subsection{Notation}
Let $m$ and $n$ be strictly positive integers, and denote the nonnegative reals by $\R_{+}.$ In the following, $\R^{n}$ is the standard $n$-dimensional Euclidean space with scalar product $(\vx, \vy) \mapsto \langle \vx, \vy \rangle_{2} = \vy^{\top} \vx,$ and $\calM_{m,n}(\R)$ is the normed space of all real-valued $m$-by-$n$ matrices with scalar product $(\vA, \vB) \mapsto \langle \vA, \vB \rangle_{F} = \trace \left( \vB^{\top} \vA \right),$ where $\top$ denotes the matrix transpose. We write $\calM_{m,m}(\R) \equiv \calM_{m}(\R).$ Let $\vA \in \calM_{n}(\R),$ and let $\vx$ and $\vy$ be in $\R^{n}.$ If $\vA$ is symmetric positive definite {(SPD)}, we write $\langle \vx, \vy \rangle_{\vA} \equiv \langle \vA \vx, \vy \rangle_{2}$ and $\| \vx \|_{\vA} \equiv \sqrt{\langle \vA \vx, \vx \rangle_{2}},$ with $\| \vx \|_{2} \equiv \sqrt{\langle \vx, \vx \rangle_{2}}.$ 
We also write $\norm{\vA}_{F} \equiv \sqrt{\langle \vA, \vA \rangle_{F}}$ to denote the Frobenius norm and $\|\vA\|_2$ denotes the spectral or 2-norm.

Given any realization, say $\delta,$ of a random variable, we write $X_\delta$ to denote said variable. We also use the notation $\pi[\cdot]$ to denote an arbitrary probability density.

Let $\calX$ be a real vector space, $f : \calX \rightarrow \R,$ and let $x \in \calX.$ Denote by $\Argmin \, f \equiv \left\{ x \in \calX \, | \, f(x) = \min_{z \in \calX} f(z) \right\}.$ If $\Argmin \, f$ is single-valued, then we write that unique element as $\argmin_{x \in \calX} \, f.$

\subsection{Hierarchical Bayes formulation} Let $m,$ $n,$ and $K$ be strictly positive integers with $m \leq n$. We consider a linear inverse problem of recovering an unknown vector $\vs \in \R^{n}$ of signals from a noisy vector $\vd \in \R^{m}$ of data observations. The signals and observations are characterized by the model
\begin{eqnarray}
    \vd = \vA \vs + \veta, \label{gen_inv_prob}
\end{eqnarray}
where $\vA \in \R^{m \times n}$ represents the the forward map and $\veta \in \R^{m}$ is a vector of realizations of a random variable $X_{\veta} \sim \calN(\vzero, \vR(\cdot)).$ In this setting, we assume that $\vR : (0,+\infty)^{K} \, \rightarrow \R^{m \times m},$ where $\vR(\cdot)$ is {SPD}, is a diagonal matrix, so it is easy to invert and compute the square root. 

Let $\vtheta = (\theta_{i})_{1 \leq i \leq K} \in (0,+\infty)^{K}$ be a vector of unknown hyperparameters, with prior density $\pi[\vtheta]$. Following a hierarchical Bayesian approach, we model $\vs \, | \, \vtheta$ as a realization of a Gaussian random variable $X_{\vs \, | \, \vtheta} \sim \calN(\vmu(\vtheta), \vQ(\vtheta)),$ where $\vmu : (0,+\infty)^{K} \, \rightarrow \R^n,$ $\vQ : (0,+\infty)^{K} \, \rightarrow \R^{n \times n},$ and $\vQ(\vtheta)$ is SPD, but is computationally infeasible to compute explicitly; therefore, we do not assume that we have access to its square root or its inverse. We do, however, assume that the matvecs with $\vQ(\vtheta)$ can be performed easily in $\mathcal{O}(n \log \, n)$ time by using the fast Fourier Transform (FFT), if the solution is represented on a uniform equispaced grid~\cite{NowakTenkleveCirpka}, or by using hierarchical matrices \cite{SaibabKitan_EffMethodsLS}; furthermore, $\mc{O}(n)$ time can be achieved in certain circumstances with the aid of $\calH^2$-matrices or the fast multipole method \cite{Ambikasaran_LSLinInversion}. 

Using Bayes' theorem, the posterior density, $\pi[\vs, \vtheta \, | \, \vd],$ is characterized by
\[ \pi[\vs, \vtheta \, | \, \vd] = \frac{\pi[\vd \, | \, \vs, \vtheta] \pi [\vs \, | \, \vtheta] \pi [\vtheta]}{\pi[\vd]} .\]
 In our setting, the posterior is explicitly 
\[ \pi[\vs, \vtheta \, | \, \vd] \propto \frac{\pi[\vtheta] \exp \left( - \frac{1}{2} \| \vd - \vA \vs \|^{2}_{\vR^{-1}(\vtheta)} - \frac{1}{2} \| \vs - \vmu(\vtheta) \|^{2}_{\vQ^{-1}(\vtheta)} \right)}{\det(\vR(\vtheta))^{1/2} \det(\vQ(\vtheta))^{1/2}}, \]
where  $\propto$ denotes the proportionality relation. The marginal posterior density is obtained by integrating out the unknowns $\vs$ and can be represented as
\begin{equation}\label{eqn:marginalpost}
    \pi[\vtheta \, | \, \vd] \propto \pi[\vtheta] \det(\vZ(\vtheta))^{-1/2} \exp \left( - \frac{1}{2} \| \vA \vmu(\vtheta) - \vd \|^{2}_{\vZ^{-1}(\vtheta)} \right), 
\end{equation}
where $\vZ : (0,+\infty)^{K} \rightarrow \R^{m \times m}$ has the representation $\vZ(\vtheta) = \vA \vQ(\vtheta) \vA^{\top} + \vR(\vtheta)$ \cite{WellPosed_StochExt_LP,LinIV_GenRV}.

For the case where $\vtheta$ is a \textit{fixed}, known value, or in the non-hierarchical setting, the maximum a posteriori estimate $\vs_{\rm post},$ which is obtained by minimizing the negative logarithm of the posterior distribution, i.e., $\pi_{\rm post}[\vs \,|\,\vd] \propto \pi[\vs\,|\,\vd] \pi[\vs]$ (see, e.g.,~\cite{IntroToBayesComp_CalvSomer}) and has the explicit expression
\begin{equation}\begin{aligned}
    \vs_{\rm post} & = \argmin_{\vs \in \R^n} (- \log \, \pi_{\rm post}[\vs \,|\,\vd] ) \nonumber \\
     &=  \argmin_{\vs \in \R^n} \frac{1}{2} \| \vd - \vA \vs \|^{2}_{\vR^{-1}} + \frac{1}{2} \| \vs - \vmu \|^{2}_{\vQ^{-1}},
    \label{eq:Tikhonov}
\end{aligned}
\end{equation}where we have suppressed the arguments containing $\vtheta$ to indicate that these quantities are fixed.
The MAP estimate $\vs_{\rm post}$ can equivalently be expressed in closed form as 
\begin{eqnarray}\label{eqn:map_s}
     {\vs_{\rm post}} = \vGamma_{\rm post}(\vA^{\top} \vR^{-1} \vd + \vQ^{-1} \vmu),
\end{eqnarray}
where $\vGamma_{\rm post} = (\vA^{\top} \vR^{-1} \vA + \vQ^{-1})^{-1}.$
For large inverse problems, solving \eqref{eqn:map_s} is not computationally feasible in practice, and several studies provide alternative, iterative approaches.

\subsection{The empirical Bayes method for hyperparameter estimation}

The empirical Bayes framework (also known as evidence approximation in the machine learning literature) allows one to estimate the values of the hyperparameters via the marginal posterior. {In doing so, the choice of the hyperparameters is informed by the model, the data, and any assumptions about the likelihood and prior formulations. The main idea behind the EB method~\cite{reich2019bayesian} is to estimate the hyperparameters $\vtheta_{\rm EB}$ and fix the hyperparameters in the posterior distribution. That is, we set $\pi[\vs \, | \, \vd] \approx \pi[\vs, \vtheta_{\rm EB} \, | \, \vd]$. The EB approach has some known drawbacks: first, it uses the data twice, once to estimate the hyperparameters and second to determine the unknowns $\vs$, second, it ignores the uncertainty in the hyperparameters and, therefore, underestimates the overall uncertainty. Nevertheless, it remains a popular approach in statistics especially for computationally challenging problems and we adopt this approach in the present paper.

Specifically, we choose the hyperparameters $\vtheta_{\rm EB}$ obtained from the maximum a posteriori (MAP) estimate of the marginal posterior distribution~\eqref{eqn:marginalpost}; alternatively,  it can obtained by minimizing the 
objective function $\calF (\vtheta),$ defined by
\begin{eqnarray}
\label{eq:objfunc}
    \calF (\vtheta) = - \log \, \pi [\vtheta] + \frac{1}{2} \logdet(\vZ(\vtheta)) + \frac{1}{2} \| \vA \vmu(\vtheta) - \vd \|^{2}_{\vZ^{-1}(\vtheta)}.
\end{eqnarray}
Once the optimal hyperparameters $\vtheta_{\rm EB}$ have been estimated, we can obtain the MAP estimate for $\vs$ by~\eqref{eqn:map_s}.

Anticipating that we will use a gradient-based approach for optimizing~\eqref{eq:objfunc}, we provide an analytical expression for the gradient $\nabla \calF = (\frac{\partial \calF}{\partial \theta_i})_{1 \leq i \leq K},$ 

\begin{align}
    (\nabla \calF (\vtheta))_{i} = &  - \frac{1}{\pi[\vtheta]} \frac{\partial \pi[\vtheta]}{\partial \theta_{i}} + \frac{1}{2} \left\langle \vZ^{-1}(\vtheta), \frac{\partial \vZ(\vtheta)}{\partial \theta_i} \right\rangle_{F} \nonumber \\
    & \hspace{.4cm} - \frac{1}{2} \left[ \vZ^{-1}(\vtheta)(\vA \vmu(\vtheta) - \vd) \right]^{\top} \left[ \frac{\partial \vZ(\vtheta)}{\partial \theta_i} \vZ^{-1}(\vtheta)(\vA \vmu(\vtheta) - \vd) - \vA \frac{\partial \vmu(\vtheta)}{\partial \theta_i} \right], \label{eq:gradient} 
\end{align}

Notice that there are various challenges for optimization. First, the objective function ($\calF$) is nonconvex, with potentially many local minima so this causes problems for the optimization algorithms which can be sensitive to the choice of the initial guess. Second, computing the objective function and the gradient ($\nabla \calF$) for each new candidate set of hyperparameters $\vtheta$ requires recalculating the log determinant and the inverse of an $m \times m$ matrix; even forming this matrix explicitly is expensive and should be avoided.  This can become computationally prohibitive for problems with large observational datasets. We focus our attention in this paper on the second challenge and in Section~\ref{sec:estimation} propose efficient methods to compute the objective function and the gradient.

\subsection{Generalized Golub-Kahan bidiagonalization}
The generalized Golub-Kahan (genGK) bidiagonalization was used for efficiently computing Tikhonov regularized solutions \eqref{eq:Tikhonov} in \cite{AKS_Chung_genHyBR} and for inverse UQ in \cite{AKS_Chung_Petroske_EffKrylov}.  However, in this work, we extend the use of the genGK bidiagonalization process for efficient hyperparameter estimation following an EB approach.  For completeness, we provide an overview of the genGK process.

Throughout this section, suppose we have some fixed hyperparameters $\bar{\vtheta} \in \R^{K}_{+}.$ 
Let $\vR \equiv \vR(\bar{\vtheta}),$ $\vQ \equiv \vQ(\bar{\vtheta}),$ and $\vmu = \vmu(\bar{\vtheta}).$ Given the matrices $\vA,$ $\vR,$ $\vQ$ and the vector $\vd$ (as defined in (\ref{gen_inv_prob})), with the initialization 
\[  
    \beta_1\vu_{1} =  {\vd - \vA \vmu} \qquad \alpha_1 \vv_1 = \vA^{\top} \vR^{-1} \vu_{1} 
    \]
the $j$th iteration of the genGK (lower) {bi}diagonalization process generates vectors $\vu_{j+1}$ and $\vv_{j+1}$ such that
\[ \begin{aligned}
    \beta_{j+1} \vu_{j+1} &= \vA \vQ \vv_{j} - \alpha_{j} \vu_{j}; \\
    \alpha_{j+1} \vv_{j+1} &= \vA^{\top} \vR^{-1} \vu_{j+1} - \beta_{j+1} \vv_{j},
\end{aligned} \]
where, for some natural number $i,$ the scalars $(\alpha_{i}, \beta_{i}) \in \R^{2}_{+}$ are chosen such that $\| \vu_{i} \|_{\vR^{-1}} = \| \vv_i \|_{\vQ} = 1.$
At the end of $k$ iterations, we have a lower bidiagonal matrix
\[ \vB_{k} = \begin{bmatrix} \alpha_1 &  &  &  \\ \beta_{2} & \alpha_2 & & & \\ & \beta_3 & \ddots & \\ & & \ddots & \alpha_k \\ & & & \beta_{k+1} \end{bmatrix} \in \R^{(k+1) \times k}, \]
and two matrices $\vU_{k+1} = [\vu_1 \, | \, \cdots \, | \, \vu_{k+1}] \in \R^{m \times (k+1)}$ and $\vV_{k} = [\vv_1 \, | \, \vv_2 \, | \, \cdots \, | \, \vv_{k}] \in \R^{n \times k}$ that satisfy the orthogonality relations
\begin{equation}\label{orthog_relations_1}
    \vU^{\top}_{k+1} \vR^{-1} \vU_{k+1} = \Id_{k+1} \qquad \text{and} \qquad \vV^{\top}_{k} \vQ \vV_{k} = \Id_{k}.
\end{equation}

Furthermore, these matrices satisfy the genGK relations 
\begin{align}\label{genGK_relations}
    \begin{cases}
        \vU_{k+1} \beta_1 \ve_{1} &= \vd - \vA \vmu \\ 
        \vA \vQ \vV_{k} &= \vU_{k+1} \vB_{k} \\ 
        \vA^{\top} \vR^{-1} \vU_{k+1} &= \vV_{k} \vB^{\top}_{k} + \alpha_{k+1} \vv_{k+1} \ve^{\top}_{k+1}. 
    \end{cases}
\end{align}
In floating point arithmetic, these relations are typically accurate to machine precision. However, the matrices $\vU_k$ and $\vV_k$ tend to lose orthogonality with respect to the appropriate inner products; therefore, in practice, we use complete reorthogonalization to mitigate this loss in accuracy.  
{The above process is summarized in Algorithm \ref{algorithm:genGK}.} 

\begin{algorithm}[!ht]
    \caption{genGK bidiagonalization. Call as $[\vU_{k+1}, \vV_{k+1}, \vB_{k}] = \textrm{genGK}(\vA, \vR, \vQ, \vmu, \vd, k)$.}
    \label{algorithm:genGK}
    
    \begin{algorithmic}[1]
    \REQUIRE Matrices $\vA,$ $\vR,$ and $\vQ;$ vectors $\vmu$ and $\vd.$
    \STATE 
    {$\beta_1 = \| \vd - \vA \vmu \|_{\vR^{-1}};$}
    \STATE 
    {$\vu_1 = \frac{\vd - \vA \vmu}{\beta_1};$}
    \STATE
    {$\alpha_{1} = \| \vA^{\top} \vR^{-1} \vu_1 \|_{\vQ};$}
    \STATE
    {$\vv_1 = \frac{\vA^{\top} \vR^{-1} \vu_1}{\alpha_1};$}
    \FOR{$j = 1, 2, \dots, k$} 
        \STATE
        {$\beta_{j+1} = \| \vA \vQ \vv_j - \alpha_j \vu_j \|_{\vR^{-1}};$}
        \STATE 
        {$\vu_{j+1} = \frac{\vA \vQ \vv_{j} - \alpha_{j} \vu_{j}}{\beta_{j+1}};$}
        \STATE
        {$\alpha_{j+1} = \| \vA^{\top} \vR^{-1} \vu_{j+1} - \beta_{j+1} \vv_j \|_{\vQ};$}
        \STATE
        {$\vv_{j+1} = \frac{\vA^{\top} \vR^{-1} \vu_{j+1} - \beta_{j+1} \vv_i}{\alpha_{j+1}};$}
    \ENDFOR
    \RETURN Matrices $\vU_{k+1},\vV_{k+1}, \vB_k$.
    \end{algorithmic}
\end{algorithm}

\paragraph{Low-rank approximation.}
We can reinterpret the genGK relations through the lens of projectors which leads us to a low-rank approximation of $\vA$. The low-rank approximation of $\vA$ will be critical in developing efficient approaches in Section~\ref{sec:estimation}. Define the following projectors:
\begin{align}
    \vP_{\vV_{k}} = \vV_{k} \vV^{\top}_{k} \vQ; ~~~~ \vP_{\vU_{k+1}} = \vU_{k+1} \vU^{\top}_{k+1} \vR^{-1}.
\end{align}
Then orthogonality relations in~\eqref{orthog_relations_1} imply that $\vP_{\vV_{k}}$ and $\vP_{\vU_{k+1}}$ are oblique projectors. Using this insight, we can rewrite (\ref{genGK_relations}) as follows:
\begin{align}\label{genGK_relations_transformed}
    \begin{cases}
        \vU_{k+1} \beta_1 \ve_{1} &= \vP_{\vU_{k+1}} (\vd - \vA \vmu) \\ 
        \vA \vP^{\top}_{\vV_{k}} &= \vU_{k+1} \vB_{k} \vV^{\top}_{k} \\ 
        \vA^{\top} \vP_{\vU_{k+1}} &= \vV_{k+1} \begin{bmatrix} \vB^{\top}_{k} \\ \alpha_{k+1} \ve^{\top}_{k+1} \end{bmatrix} \vU^{\top}_{k+1}. 
    \end{cases}
\end{align}
This reinterpretation of the genGK relations in (\ref{genGK_relations}) using projectors suggests a natural low-rank approximation for $\vA$:
\begin{equation}\label{approx_of_A}
    \vA \approx \tilde{\vA}_{k} = \vA \vP^{\top}_{\vV_{k}} = \vU_{k+1} \vB_{k} \vV^{\top}_{k}.
\end{equation}
{In the next section,} we will use this low-rank approximation to approximate {the objective function \eqref{eq:objfunc} and the gradient \eqref{eq:gradient} for EB hyperparameter estimation. Moreover,} the projection-based viewpoint of the low-rank approximation will be useful {for} error analysis.

\section{Efficient hyperparameter estimation}
\label{sec:estimation}
{The main goal of this section is to solve the optimization problem involving~\eqref{eq:objfunc} by exploiting the genGK bidiagonalization.  Consider minimizing the marginal posterior, 
\begin{equation}
\label{eqn:EBmin}
\min_{\vtheta \in \mathbb{R}_+^K} \calF(\vtheta)
\end{equation}
where $\calF(\vtheta)$ is defined in \eqref{eq:objfunc}. First, we use the genGK bidiagonalization to approximate the objective function and gradient, so that evaluations during optimization can be done efficiently.  Then, we provide an error analysis to quantify the errors between $\calF$ and its approximations, which will be used for monitoring the convergence of the genGK process.}

\subsection{Approximations using genGK}\label{ssec:gengk}
As before, we assume that we have a fixed $\bar{\vtheta},$ and for ease of exposition, we suppress the dependence on $\bar{\vtheta}$.  For any differentiable mapping $T: \R^{K}_{+} \rightarrow \R^{m\times n},$ we write for brevity
\[\begin{aligned}
     \partial_{\theta_i} T  = \frac{\partial T (\vtheta)}{\partial \theta_{i}} \bigg|_{\vtheta = \bar{\vtheta}} \qquad 1 \leq i \leq K.
\end{aligned}
\]
The objective function $\bar{\calF} \equiv \calF(\bar\vtheta)$ takes the form
\begin{equation}\label{calF_bar}
    \bar{\calF} \equiv - \log \, \pi  + \frac{1}{2} \logdet(\vZ) + \frac{1}{2} \| \vA \vmu - \vd \|_{\vZ^{-1}}^2,
\end{equation}
and the gradient ${\nabla \calF} = ({\partial_{\theta_i} \calF})_{1 \leq i \leq K} \in \R^{K}$ takes the form (for $1 \leq i \leq k$)
\[\begin{aligned}
     \partial_{\theta_i}\calF  = &- \frac{1}{\pi[\bar{\vtheta}]} \frac{\partial \pi[\bar{\vtheta}]}{\partial \theta_{i}} + \frac{1}{2} \left\langle \vZ^{-1}, {\partial_{\theta_i}\vZ(\bar{\vtheta})} \right\rangle_{F} \nonumber \\
    &- \frac{1}{2} \left\langle {(\partial_{\theta_i} \vZ}) \vZ^{-1}(\vA \vmu - \vd) - \vA {\partial_{\theta_i} \vmu(\bar{\vtheta})}, \vZ^{-1}(\vA \vmu - \vd)  \right\rangle_{2}.  
\end{aligned}
\]

\paragraph{Approximation to the objective function.} We can approximate $\bar{\calF}$ by substituting our low-rank approximation $\tilde{\vA}_k$ for $\vA$ in~\eqref{approx_of_A} (with the exception of one term). Along with the use of the Sherman--Morrisson--Woodbury (SMW) formula, yields the following formulas for $\vZ$ and $\vZ^{-1}:$
\begin{eqnarray}\label{Z_approx}
    \vZ \approx &  \tilde{\vZ}_{k} = &  \vU_{k+1} \vB_{k} (\vU_{k+1} \vB_{k})^{\top} + \vR  \\\label{invZ_approx}
    \vZ^{-1} \approx  & \tilde{\vZ}^{-1}_{k} = & \vR^{-1} - \vR^{-1} \vU_{k+1} \vB_{k}(\Id_{k} + \vB^{\top}_{k} \vB_{k})^{-1} (\vU_{k+1} \vB_{k})^{\top} \vR^{-1}. 
\end{eqnarray}
Set $\vTheta_{k} = \Id_{k+1} + \vB_{k} \vB^{\top}_{k},$ which we note is positive definite. Therefore, using Sylvester's determinant identity, we approximate $\logdet( \vZ )$ as
\begin{align*}
    \logdet( \vZ ) \approx \logdet( \tilde{\vZ}_{k} ) = \logdet (\vR) + \logdet (\vTheta_{k}).
\end{align*}
Additionally, using \eqref{genGK_relations}, we can approximate
\begin{align*}
    \| \vA \vmu - \vd \|^{2}_{\vZ^{-1}} = \| \vU_{k+1} \beta_{1} \ve_{1} \|^{2}_{\vZ^{-1}} \approx \| \vU_{k+1} \beta_{1} \ve_{1} \|^{2}_{\tilde{\vZ}^{-1}_{k}} = \| \beta_1 \ve_{1} \|^{2}_{\vTheta^{-1}_{k}}.
\end{align*}
Putting everything together, our approximation of $\bar{\calF},$ denoted by $\tilde{\calF}_{k},$ is
\begin{align}\label{eqn:approx_obj}
    \tilde{\calF}_{k} \equiv - \log \, \pi [\bar{\vtheta}] + \frac{1}{2} \logdet(\vR) + \frac{1}{2} \logdet(\vTheta_{k}) + \frac{1}{2} \| \beta_1 \ve_{1} \|^{2}_{\vTheta^{-1}_{k}}. 
\end{align}
We observed that forming $\vTheta_k$ explicitly caused numerical issues while evaluating the log determinant term. Therefore, we instead computed this term by computing the singular values of $\vB_k$, denoted $\{\sigma_j(\vB_k)\}_{j=1}^k$, and then computing $\logdet(\vTheta_k) = \sum_{j=1}^k\log(1+\sigma_j(\vB_k)^2)$.   

\paragraph{Approximation to gradient.} We can, similarly, approximate our expression for the gradient when it is evaluated at the point $\bar{\vtheta}$.  By utilizing (\ref{approx_of_A}), we have
\begin{equation}\label{eqn::approx_of_partialZ}
\begin{aligned}
    \partial_{\theta_i} \vZ \approx \widetilde{\partial_{\theta_i} \vZ}_{k} \equiv \tilde{\vA}_{k} \partial_{\theta_i} \vQ \tilde{\vA}^{\top}_{k} + \partial_{\theta_i} \vR , \qquad 1 \leq i \leq K.
\end{aligned}
\end{equation}
In view of \eqref{invZ_approx}, our approximation to ${\partial_{\theta_i} \calF} ,$ denoted by $\widetilde{\partial_i \calF}_{k},$ is
\begin{equation}\label{eqn::approx_to_partialF}
    \begin{aligned}
        \widetilde{\partial_i \calF}_{k} = &- \frac{1}{\pi[\bar{\vtheta}]} \frac{\partial \pi[\bar{\vtheta}]}{\partial \theta_{i}} + \frac{1}{2} \left\langle \tilde{\vZ}^{-1}_{k}, \widetilde{\partial_{\theta_i} \vZ}_{k} \right\rangle_{F}  \\
        &- \frac{1}{2} \left\langle ( \widetilde{\partial_{\theta_i} \vZ}_{k} ) \tilde{\vZ}^{-1}_{k}(\vA \vmu - \vd) - \vA {\partial_{\theta_i} \vmu}, \tilde{\vZ}^{-1}_{k}(\vA \vmu - \vd)  \right\rangle_{2}, \qquad 1 \leq i \leq K.
    \end{aligned}
\end{equation}
The approximation of the gradient ${\nabla \calF}$ is denoted by $\widetilde{\nabla \calF}_{k}.$ In the form written, it is not clear how the approximations lead to improved computational benefits. However, we can derive an equivalent but alternative expression that is more computationally efficient to evaluate and that is what we implement in practice. To this end, define the matrices 
\begin{equation}\label{eqn:psi}\vPsi_i^Q = \vV_k^\top \partial_{\theta_i}\vQ \vV_k, \qquad \vPsi_i^R = \vU_{k+1}^\top \vR^{-1}(\partial_{\theta_i}\vR) \vR^{-1} \vU_{k+1} \qquad 1 \leq i \leq K,\end{equation}
and the matrix $\vT_k = \vB_k^\top \vB_k$. With these definitions, we can show (see Appendix~\ref{sec:derivations})
\begin{equation}\label{eqn:term1simp} \inn{\tilde{\vZ}^{-1}_{k}}{ \widetilde{\partial_{\theta_i} \vZ}_{k}}_{F} 
= \inn{\vPsi_i^Q}{\vT_k(\vI+\vT_k)^{-1}}_F + \inn{\partial_{\theta_i}\vR}{\vR^{-1}}_F - \inn{\vB_k^\top \vPsi_i^R\vB_k}{(\vI+\vT_k)^{-1}}_F.
\end{equation} The details of computing the estimates of the objective function and the gradient are given in Algorithm~\ref{alg:objgrad}. In Section~\ref{ssec:compcosts}, we will derive computational costs and provide an error analysis.

\begin{algorithm}[!ht]
\begin{algorithmic}[1]
    \REQUIRE Parameter $\vtheta$, matrix $\vA$, and parameter $k$
    \STATE Compute matrices $\vR,\vQ$
    \STATE $[\vU_{k+1},\vB_k, \vV_{k}]=$genGK$(\vA,\vR,\vQ,\vmu,\vd,k)$;
    \STATE \COMMENT{Step 1: Estimating the objective function} 
    \STATE Compute $\vTheta_k = \vI_{k+1} + \vB_k\vB_k^\top$ and $\vT_k = \vB_k^\top \vB_k$
    \STATE Compute $\widetilde{\calF}_k $ using~\eqref{eqn:approx_obj}.
    \STATE \COMMENT{Step 2: Estimating the gradient}
    \STATE Compute $\vr_k = \widetilde{\vZ}_k^{-1}(\vA\vmu - \vd)$ using~\eqref{invZ_approx}        
    \FOR {i = 1,\dots, K}
    \STATE Compute $\vPsi_i^Q $ and $\vPsi_i^R  $ as in~\eqref{eqn:psi}
    \STATE Compute $\inn{\tilde{\vZ}^{-1}_{k}}{ \widetilde{\partial_{\theta_i} \vZ}_{k}}_{F}$ using~\eqref{eqn:term1simp}
    
    \STATE Compute $\widetilde{\partial_{\theta_i} \vZ}_{k}\vr_k = \vU_{k+1}\vB_k \vPsi_i^Q (\vU_{k+1}\vB_k)^\top \vr_k + {\partial_{\vtheta_i} \vR} \vr_k $ by exploiting the low-rank structure 
    \RETURN Approximation to objective function $\widetilde{\calF}_k$ and gradient $(\widetilde{\partial_{\theta_i}\calF})_{i=1}^K$.
    \ENDFOR
\end{algorithmic}
\caption{genGK: Approximation to objective function and gradient}
\label{alg:objgrad}
\end{algorithm}

\subsection{Computational Cost}\label{ssec:compcosts}

 To derive an estimate of the computational cost, we have to make certain assumptions. First, assume that the computational cost associated with computing the forward operator or its adjoint, $\vA \vx$ or $\vA^{\top} \vy$ respectively (where $(\vx, \vy) \in \R^{n} \times \R^{m}$), can be modeled by $\calT_{\vA}.$ Additionally, assume that the cost associated with computing $\vQ \vx$ or $\partial_{\theta_i} \vQ  \vx$ for any $1 \leq i \leq K $ is $\calT_{\vQ}.$  Finally, we take $\vR$ to be a diagonal matrix which assumes that the noise is uncorrelated.

 With these assumptions in place, the total cost associated with executing Algorithm~\ref{algorithm:genGK} is 
\begin{align*}
    2 (k + 1) \calT_{\vA} + (2k + 1) \calT_{\vQ} + O(k(m + n)) \text{ flops},
\end{align*}
where ``flops'' refers to floating point operations. This represents the dominant cost of our approach.

Once the genGK matrices have been computed, it is straightforward to compute the objective function and the gradient. For the objective function, we need to compute the SVD of $\vB_k$, so the additional cost of computing $\tilde{\calF}_{k}$ is $O(k^3)$ flops. The cost of computing the gradient is additionally $K(k \calT_{\vQ} + O(k^2(m + n) + k^3))$ flops.

We summarize these results in Table~\ref{tab:compcosts}.

\begin{table}[h!] \centering 
\begin{tabular}{ c|c|c|c } 
 & genGK & $\tilde{\calF}_{k}$ &  $\widetilde{\partial \calF}_{k}$ \\ 
 \hline
 Involving $\vA$ & $2 (k + 1) \calT_{\vA}$ & $0$ & 0 \\ 
 \hline
 Involving $\vQ$ (and derivatives) & $(2k + 1) \calT_{\vQ}$ & $0$ & $kK \calT_{\vQ}$
\\ \hline
Additional & $O(k(m + n))$ & $O(k)$ &  $O(k^2(m + n) + k^3))$
\end{tabular}
\caption{Computational costs associated with Algorithm \ref{algorithm:genGK}, $\tilde{\calF}_{k},$ and $\widetilde{\partial \calF}_{k}.$ Here $\calT_{\vA}$ and $\calT_{\vQ}$ model the costs associated with computing matrix-vector products with $\vA$ and $\vQ$ (or $\partial_{\theta_i} \vQ $) respectively.}
\label{tab:compcosts}
\end{table}

\subsection{Error analysis of genGK approximations}
The following two propositions quantify the errors between $\calF$ and an approximation of $\calF$ when the approximation is generated by a low-rank representation of the matrix $\vZ.$ These results provide insight into the accuracy of the genGK process and help develop an {\em a posteriori} error estimator to monitor the error in $\calF_k$.
Let  $\sigma_{1}(\widehat{\vA}) \geq \sigma_{2}(\widehat{\vA}) \geq \cdots \geq \sigma_{r}(\widehat{\vA}) > \sigma_{r + 1}(\widehat{\vA}) = \cdots = \sigma_{\min \, \{ m, n \}}(\widehat{\vA}) = 0$ be the singular values of $\widehat{\vA}$ and  $r = \rank(\widehat{\vA})$. Let $\widehat{\vA}_{k} = \vU \vSigma_{k} \vV^{\top}$ be the $k$-rank approximation of the matrix $\widehat{\vA} = \vR^{-1/2} \vA \vQ^{1/2}$  for $1 \leq k < r.$

\begin{proposition}
\label{prop::svd_bound}
Define the approximation to $\bar\calF$ using the truncated SVD of $\widehat{\vA}$ as
\begin{equation}
\begin{aligned}
    \widehat{\calF}_{k} = - \log \, \pi[\bar{\vtheta}] + \frac{1}{2} \logdet \, (\vZ_{k}) + \frac{1}{2} \| \vA \vmu - \vd \|_{\vZ^{-1}_{k}}^2,
\end{aligned}
\end{equation}
with $\vZ_{k} = \vR^{1/2} \left[ \widehat{\vA}_{k} \widehat{\vA}_{k}^{\top} + \Id_{m} \right] \vR^{1/2}.$ Then
\begin{equation}
\begin{aligned}
    |\bar{\calF} - \widehat{\calF}_{k}| \leq \frac{1}{2} \sum\limits_{i = k + 1}^{r} \log \, (1 + \sigma^{2}_i(\widehat{\vA})) + \frac{1}{2} \beta^{2}_{1} \left( \frac{\sigma^{2}_{k+1}}{1 + \sigma^{2}_{k+1}} \right),
\end{aligned}
\end{equation}
where $\beta_{1} = \| \vA \vmu - \vd \|_{\vR^{-1}}$.
\end{proposition}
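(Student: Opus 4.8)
The plan is to exploit the factorization that connects $\vZ$ to the prewhitened operator $\widehat{\vA} = \vR^{-1/2}\vA\vQ^{1/2}$. Since $\widehat{\vA}\widehat{\vA}^{\top} = \vR^{-1/2}\vA\vQ\vA^{\top}\vR^{-1/2}$, we immediately get $\vZ = \vR^{1/2}(\widehat{\vA}\widehat{\vA}^{\top} + \Id_m)\vR^{1/2}$, which matches the stated form of $\vZ_k$ with the full $\widehat{\vA}$ in place of its truncation $\widehat{\vA}_k$. The first step is therefore to note that the prior term $-\log\pi[\bar{\vtheta}]$ is identical in $\bar{\calF}$ and $\widehat{\calF}_k$ and cancels, so that $\bar{\calF} - \widehat{\calF}_k = \tfrac{1}{2}(\logdet\vZ - \logdet\vZ_k) + \tfrac{1}{2}(\|\vA\vmu-\vd\|_{\vZ^{-1}}^2 - \|\vA\vmu-\vd\|_{\vZ_k^{-1}}^2)$. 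I would then bound the log-determinant part and the quadratic part separately and combine them with the triangle inequality, which is exactly what produces the two-term right-hand side.

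For the log-determinant part, the factorization and the multiplicativity of $\det$ give $\logdet\vZ = \logdet\vR + \logdet(\widehat{\vA}\widehat{\vA}^{\top}+\Id_m)$, and the eigenvalues of $\widehat{\vA}\widehat{\vA}^{\top}$ are precisely $\sigma_i^2(\widehat{\vA})$ (with zeros beyond $r$), so this equals $\logdet\vR + \sum_{i=1}^r \log(1+\sigma_i^2)$. The same computation for $\vZ_k$ truncates the sum at $k$, so $\logdet\vZ - \logdet\vZ_k = \sum_{i=k+1}^r \log(1+\sigma_i^2)$ exactly; this is nonnegative and already equals its own absolute value, yielding the first term of the bound with no inequality to spare.

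For the quadratic part I would set $\vw = \vR^{-1/2}(\vA\vmu-\vd)$, so that $\|\vw\|_2^2 = \|\vA\vmu-\vd\|_{\vR^{-1}}^2 = \beta_1^2$, and write both weighted norms in the orthonormal left-singular basis $\vU$ of $\widehat{\vA}$. In that basis both $(\widehat{\vA}\widehat{\vA}^{\top}+\Id_m)^{-1}$ and $(\widehat{\vA}_k\widehat{\vA}_k^{\top}+\Id_m)^{-1}$ are diagonal, so with $\vc = \vU^{\top}\vw$ the difference of the two quadratic forms collapses to $-\sum_{i=k+1}^r c_i^2\,\sigma_i^2/(1+\sigma_i^2)$, where the entries for $i\le k$ cancel and those beyond $r$ vanish. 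I expect the only genuine estimate in the whole argument to live here: using that $t\mapsto t/(1+t)$ is increasing and that $\sigma_i\le\sigma_{k+1}$ for $i\ge k+1$, I can pull out the factor $\sigma_{k+1}^2/(1+\sigma_{k+1}^2)$, and then bound the remaining $\sum_{i>k} c_i^2$ by $\|\vc\|_2^2 = \|\vw\|_2^2 = \beta_1^2$. This gives $|\,\|\vA\vmu-\vd\|_{\vZ^{-1}}^2 - \|\vA\vmu-\vd\|_{\vZ_k^{-1}}^2| \le \beta_1^2\,\sigma_{k+1}^2/(1+\sigma_{k+1}^2)$, and the triangle inequality combines this with the log-determinant term to finish. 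The main obstacle is purely bookkeeping: diagonalizing the quadratic forms in the correct basis and tracking the Parseval identity $\|\vU^{\top}\vw\|_2 = \beta_1$ so that the monotonicity bound on $t/(1+t)$ lands on the right constant.
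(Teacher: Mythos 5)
Your proof is correct, and while it uses the same top-level decomposition as the paper (cancel the prior term, split into a log-determinant difference and a quadratic-form difference, bound each, combine), the way you handle the two pieces is genuinely more elementary than the paper's argument. For the log-determinant term, the paper invokes a majorization lemma for $\logdet$ (from a cited reference) to obtain an inequality, whereas you observe that because $\widehat{\vA}_k$ is the \emph{truncated SVD} of $\widehat{\vA}$, the two log-determinants can be computed exactly in terms of singular values and the difference is identically $\sum_{i=k+1}^{r}\log(1+\sigma_i^2(\widehat{\vA}))$ — sharper and simpler. For the quadratic term, the paper goes through Cauchy--Schwarz, submultiplicativity of the spectral norm, and a perturbation lemma of Bhatia for the map $x \mapsto (1+x)^{-1}$; you instead diagonalize both resolvents simultaneously in the common left-singular basis $\vU$, reduce the difference to $-\sum_{i>k} c_i^2\,\sigma_i^2/(1+\sigma_i^2)$, and finish with monotonicity of $t\mapsto t/(1+t)$ and Parseval. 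Both routes give the identical bound. What the paper's heavier machinery buys is generality: the operator-norm/perturbation-lemma argument does not require the approximation to share singular vectors with $\widehat{\vA}$, which is exactly the situation in the companion result for the genGK approximation (Proposition~\ref{prop:genGK_bound}), where the projected operator $\widehat{\vA}\vPi_{\vV_k}$ is \emph{not} a truncated SVD and your simultaneous-diagonalization step would fail. Your argument exploits the special structure of this proposition and is the cleaner proof for it, but it does not transfer to the genGK case.
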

\begin{proof}
See Appendix~\ref{ssec:svd}.
\end{proof}

This proposition says that if the singular values of $\widehat{\vA}$ (alternatively, generalized singular values of $\vA$) decay rapidly, then the low-rank approximation $\widehat{\vA}_k$ obtained using the truncated SVD results in an accurate approximation of the objective function. Indeed, if $\rank(\widehat{\vA}) = k$, then $\widehat{\calF}_k = \calF$. However, repeated computation of the truncated SVD of $\widehat{\vA}$ at each optimization step is computationally expensive. As numerical experiments will demonstrate, the low-rank approximation obtained using genGK is accurate and is also computationally efficient. The next result derives an error estimate for the absolute error in the approximation of the objective function, where the approximation is computed using genGK. 

\begin{proposition}
\label{prop:genGK_bound}
Consider $\bar{\calF}$ and $\tilde{\calF}_{k}$ as defined in (\ref{calF_bar}) and (\ref{eqn:approx_obj}) respectively. Define the orthogonal projector, $\vPi_{\vV_{k}}\equiv \vQ^{1/2} \vV_{k} (\vQ^{1/2} \vV_{k})^{\top}$ and let $\vH_{\vQ} \equiv \vQ^{1/2} \vA^{\top} \vR^{-1} \vA \vQ^{1/2}$ and ${\vH}^{(k)}_{\vQ} =  \vPi_{\vV_{k}}\vH_{\vQ}\vPi_{\vV_{k}} .$ Then the absolute error in the objective function satisfies
\begin{align} 
    |\bar{\calF} - \tilde{\calF}_{k}| \leq \frac{1}{2} \left[ \xi_{k} + \beta^{2}_{1} \left( \frac{\xi_{k}}{1 + \xi_{k}} \right) \right],
\end{align}
where $\xi_k = \trace \left( \vH_{\vQ} \right) - \trace \left( {\vH}^{(k)}_{\vQ} \right),$ and  $\beta_1 = \| \vd - \vA \vmu \|_{\vR^{-1}}$.
\begin{proof}
See Appendix~\ref{ssec:proofgengk}.
\end{proof}
\end{proposition}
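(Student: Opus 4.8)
The plan is to cancel the common prior term, split the error into a log-determinant contribution and a data-misfit (quadratic) contribution, bound each separately, and combine by the triangle inequality. Throughout I would work with the whitened matrix $\widehat{\vA} = \vR^{-1/2}\vA\vQ^{1/2}$ and the matrices with orthonormal columns $\widehat{\vU}_{k+1} = \vR^{-1/2}\vU_{k+1}$ and $\widehat{\vV}_k = \vQ^{1/2}\vV_k$ (orthonormality being exactly the relations~\eqref{orthog_relations_1}). Setting $\vG = \widehat{\vA}\widehat{\vA}^\top$ and $\vG_k = \widehat{\vA}\vPi_{\vV_k}\widehat{\vA}^\top$, the genGK identity $\vA\vQ\vV_k = \vU_{k+1}\vB_k$ gives $\widehat{\vA}\widehat{\vV}_k = \widehat{\vU}_{k+1}\vB_k$, hence $\vG_k = \widehat{\vU}_{k+1}\vB_k\vB_k^\top\widehat{\vU}_{k+1}^\top$, and one checks directly that $\vZ = \vR^{1/2}(\Id_m + \vG)\vR^{1/2}$ and $\tilde{\vZ}_k = \vR^{1/2}(\Id_m + \vG_k)\vR^{1/2}$. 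After the common terms $-\log\pi[\bar{\vtheta}]$ and $\tfrac12\logdet(\vR)$ cancel, the error reduces to $\bar{\calF} - \tilde{\calF}_k = \tfrac12[\logdet(\Id_m+\vG) - \logdet(\Id_m+\vG_k)] + \tfrac12[\|\vA\vmu - \vd\|_{\vZ^{-1}}^2 - \|\beta_1\ve_1\|_{\vTheta_k^{-1}}^2]$.

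For the log-determinant term I would record that the nonzero eigenvalues of $\vG$ are those of $\vH_{\vQ} = \widehat{\vA}^\top\widehat{\vA}$, say $\mu_1\ge\cdots\ge\mu_r$, while the nonzero eigenvalues of $\vG_k$ are those of the compression $\vT_k = \widehat{\vV}_k^\top\vH_{\vQ}\widehat{\vV}_k$, say $\lambda_1\ge\cdots\ge\lambda_k$. Cauchy interlacing for the compression by the orthonormal $\widehat{\vV}_k$ gives $\lambda_j\le\mu_j$, so $\sum_{i=1}^r\log(1+\mu_i) - \sum_{j=1}^k\log(1+\lambda_j)\ge 0$; bounding $\log(1+\mu_j)-\log(1+\lambda_j)\le\mu_j-\lambda_j$ and $\log(1+\mu_i)\le\mu_i$ collapses it to $\sum_i\mu_i - \sum_j\lambda_j = \trace(\vG) - \trace(\vG_k)$. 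Since $\vG - \vG_k = \widehat{\vA}(\Id_n - \vPi_{\vV_k})\widehat{\vA}^\top\succeq\vzero$ and $\trace(\vG - \vG_k) = \trace((\Id_n-\vPi_{\vV_k})\vH_{\vQ}) = \trace(\vH_{\vQ}) - \trace(\vH_{\vQ}^{(k)}) = \xi_k$ (using idempotency of $\vPi_{\vV_k}$ and cyclicity of the trace), this term is bounded by $\tfrac12\xi_k$.

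For the quadratic term I would first reduce both pieces to resolvent quadratic forms at the unit vector $\widehat{\vu}_1 = \vR^{-1/2}\vu_1 = \widehat{\vU}_{k+1}\ve_1$: since $\vA\vmu-\vd = -\beta_1\vu_1$ one gets $\|\vA\vmu-\vd\|_{\vZ^{-1}}^2 = \beta_1^2\,\widehat{\vu}_1^\top(\Id_m+\vG)^{-1}\widehat{\vu}_1$, and an SMW expansion of $\vTheta_k^{-1} = (\Id_{k+1}+\vB_k\vB_k^\top)^{-1}$ shows $\|\beta_1\ve_1\|_{\vTheta_k^{-1}}^2 = \beta_1^2\,\widehat{\vu}_1^\top(\Id_m+\vG_k)^{-1}\widehat{\vu}_1$. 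Hence the difference equals $-\beta_1^2 Q$ with $Q = \widehat{\vu}_1^\top[(\Id_m+\vG_k)^{-1} - (\Id_m+\vG)^{-1}]\widehat{\vu}_1$, and since $\vG\succeq\vG_k$ the bracketed matrix is positive semidefinite, so $0\le Q\le\lambda_{\max}((\Id_m+\vG_k)^{-1} - (\Id_m+\vG)^{-1})$. Writing $\vX = \Id_m+\vG_k$ and $\vE = \vG-\vG_k$ and using the congruence $\vX^{-1}-(\vX+\vE)^{-1} = \vX^{-1/2}\,\vY(\Id_m+\vY)^{-1}\,\vX^{-1/2}$ with $\vY = \vX^{-1/2}\vE\vX^{-1/2}\succeq\vzero$, the estimate $\lambda_{\max}\le\|\vX^{-1}\|_2\,\|\vY(\Id_m+\vY)^{-1}\|_2$ together with $\|\vX^{-1}\|_2\le1$ (because $\vG_k\succeq\vzero$) and the monotonicity of $t\mapsto t/(1+t)$ gives $Q\le\|\vE\|_2/(1+\|\vE\|_2)\le\xi_k/(1+\xi_k)$, using $\|\vE\|_2\le\trace(\vE)=\xi_k$. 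This contributes $\tfrac12\beta_1^2\,\xi_k/(1+\xi_k)$, and summing the two contributions yields the claim.

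The main obstacle is the quadratic term, specifically extracting the factor $1/(1+\xi_k)$: the naive bounds $Q\le\|\vE\|_2\le\xi_k$ and $Q\le 1$ are both too weak, and the improvement hinges on the congruence factorization by $(\Id_m+\vG_k)^{-1/2}$, which simultaneously exploits $\|(\Id_m+\vG_k)^{-1}\|_2\le1$ and the scalar monotonicity of $t/(1+t)$. A secondary point requiring care is the identification $\trace(\vG-\vG_k)=\xi_k$ together with the interlacing $\lambda_j\le\mu_j$, both of which rest on rewriting $\vG_k$ as the compression $\widehat{\vA}\vPi_{\vV_k}\widehat{\vA}^\top$ of $\vG$ through the orthogonal projector $\vPi_{\vV_k}$.
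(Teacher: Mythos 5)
Your proposal is correct, and its skeleton matches the paper's proof: cancel the prior term, whiten by $\vR^{-1/2}$ and $\vQ^{1/2}$, split the error into a log-determinant difference and a quadratic-form difference, and reduce both to the single quantity $\xi_k = \trace(\vH_{\vQ}) - \trace(\vH_{\vQ}^{(k)}) = \trace\left(\widehat{\vA}(\Id_{n} - \vPi_{\vV_k})\widehat{\vA}^{\top}\right)$. Where you genuinely diverge is in the two core estimates. For the log-determinant term, with $\vM = \widehat{\vA}\widehat{\vA}^{\top} \succeq \vN = \widehat{\vA}\vPi_{\vV_k}\widehat{\vA}^{\top}$, the paper invokes \cite[Lemma 9]{alexanderian2018efficient} to get $\logdet(\Id_m+\vM)-\logdet(\Id_m+\vN) \leq \logdet(\Id_m+\vM-\vN) \leq \trace(\vM-\vN)$, whereas you prove the same bound from scratch via Poincar\'e/Cauchy interlacing for the compression $\vT_k = (\vQ^{1/2}\vV_k)^{\top}\vH_{\vQ}(\vQ^{1/2}\vV_k)$ together with the scalar inequalities $\log(1+\mu)-\log(1+\lambda)\leq\mu-\lambda$ and $\log(1+\mu)\leq\mu$. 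For the quadratic term, the paper appeals to operator-monotone perturbation theory ($f(x)=x/(1+x)$ is operator monotone with $f(0)=0$, so $\|f(\vM)-f(\vN)\|_2 \leq f(\|\vM-\vN\|_2)$ by \cite[Theorem X.1.1]{Bhatia_Matrix_Analysis}), whereas you obtain the identical factor $\|\vM-\vN\|_2/(1+\|\vM-\vN\|_2)$ by the explicit congruence $\vX^{-1}-(\vX+\vE)^{-1} = \vX^{-1/2}\vY(\Id_m+\vY)^{-1}\vX^{-1/2}$ with $\vY = \vX^{-1/2}\vE\vX^{-1/2}$, plus submultiplicativity and $\|\vX^{-1}\|_2\leq 1$; both routes then finish with $\|\vM-\vN\|_2 \leq \trace(\vM-\vN) = \xi_k$ and monotonicity of $t\mapsto t/(1+t)$. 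The bounds obtained are quantitatively identical; your version buys self-containedness (only interlacing and elementary matrix algebra, no external lemmas), while the paper's is shorter given the two cited results. Two presentational merits of your write-up worth noting: you make explicit, via the SMW formula and the genGK identity $\vR^{-1/2}\vA\vQ\vV_k = \vR^{-1/2}\vU_{k+1}\vB_k$, that the terms $\logdet(\vTheta_k)$ and $\|\beta_1\ve_1\|^2_{\vTheta_k^{-1}}$ appearing in $\tilde{\calF}_k$ really are the quantities computed from $\tilde{\vZ}_k$ (the paper leaves this implicit from Section~\ref{ssec:gengk}); and you correctly observe that the log-determinant error is nonnegative while the quadratic error is nonpositive, so the final step is a triangle inequality rather than a sum of same-signed terms.
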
 

By \cite{AKS_Chung_Petroske_EffKrylov}, we have the following recurrence for $\xi_k= \trace \left( \vH_{\vQ} - {\vH}^{(k)}_{\vQ} \right).$ For any $k < \min\{ m, n \} - 1,$ $\xi_k$ satisfies 
    \begin{equation*}
        \xi_{k+1} = \xi_{k} - ( \alpha^{2}_{k+1} + \beta^{2}_{k+2} ).
    \end{equation*}
This shows that the term $\xi_k$ is monotonically nonincreasing.  Therefore, Proposition~\ref{prop:genGK_bound} says that the absolute error can be bounded by a monotonically increasing function. Furthermore, this proposition gives us a way to monitor the accuracy of the objective function and determine a suitable stopping criterion for terminating the iterations. This is discussed next.

\subsection{Monitoring convergence of genGK}\label{ssec:monit}
As mentioned earlier, an important issue that needs to be addressed is a stopping criterion to determine the value of $k$ with which to approximate the objective function and the gradient.

Proposition~\ref{prop:genGK_bound} can be converted to a stopping criterion to monitor convergence, but the issue is that the initial iterate $\xi_0 = \trace(\vH_\vQ)$ cannot be computed efficiently in a matrix-free fashion. To address this issue we use a combination of the Monte Carlo trace estimator and the genGK recurrence to monitor $\xi_k$; then in combination with the upper bound in Proposition~\ref{prop:genGK_bound} we can monitor the convergence. 

\paragraph{Monte Carlo estimator.} First, we describe the Monte Carlo estimators for the trace of a matrix. Let $\vK \in \R^{n\times n}$ be a square matrix and suppose we are interested in estimating its trace using matrix-free techniques. Let $\vomega \in \R^n$ be a random vector with mean zero and identity covariance matrix. Then note that 
\[ \expect[\vomega^\top\vK\vomega] =  \expect[\trace(\vomega^\top\vK\vomega)] = \trace(\vK\expect[\vomega\vomega^\top]) = \trace(\vK),  \]
where we have used the cyclic property of trace. Therefore, based on this identity, we can use a Monte Carlo estimator for $\trace(\vK)$ as 
\[ \trace(\vK) \approx \frac{1}{n_{\rm mc}}\sum_{j=1}^{n_{\rm mc}} \vomega_j^\top \vK\vomega_j = \frac{1}{n_{\rm mc}}\trace(\vOmega^\top \vK\vOmega) \equiv \widehat{\trace}(\vK), \]
where $\vomega_j$ for $1 \leq j \leq n_{\rm mc}$ are independent random vectors from the distribution of $\vomega$ and $\vOmega = \begin{bmatrix} \vomega_1 & \dots & \vomega_{n_{\rm mc}}\end{bmatrix}$.

This next result analyzes the error in the trace estimator. 
\begin{proposition}\label{prop:trace} Let $\vomega_k \in \mathbb{R}^n$ for $1 \leq k \leq N$ be random vectors with independent sub-Gaussian entries that have zero mean and $\max_j \| (\vomega_k)_j\|_{\psi_2} \leq K_\psi$ (see~\cite[Section 2.5.2]{vershynin2018high} for a definition of the norm $\|\cdot \|_{\psi_2})$. The error in the trace estimator satisfies the probabilistic bound
\begin{equation}\label{eqn:probtrace} \mathbb{P}\left\{| \widehat{\trace} (\vH\vQ) - \trace(\vH_{\vQ})| \geq t \right\} \leq 2\exp\left( - C_{\rm HW} N \min\left\{ \frac{t^2}{K_\psi^4\|\vH\vQ\|_F^2}, \frac{t}{K_\psi^2\|\vH\vQ\|_2}  \right\} \right) , \end{equation}
where $C_{\rm HW}$ is an absolute constant. Furthermore, let $\epsilon > 0$ be the desired relative error and  $ 0 < \delta < 1$  the desired failure probability. Then with probability at least $1-\delta$, the number of samples $N_{\rm samp}$ must satisfy 
\[ N_{\rm samp} \geq \frac{K_\psi^2 \log(2/\delta)}{C_{\rm HW} \epsilon^2 }
\left( \frac{K_\psi^2\|\vH\vQ\|_F^2}{\trace(\vH\vQ)^2}  +  
\frac{\epsilon \|\vH\vQ\|_2}{ \trace(\vH\vQ)}\right) \]
 to ensure $| \widehat{\trace} (\vH\vQ) - \trace(\vH\vQ)| \leq \epsilon |{\trace} (\vH\vQ)|$.
\end{proposition}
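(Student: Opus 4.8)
The plan is to reduce the $N$-sample estimator to a single quadratic form in a long random vector and then invoke the Hanson--Wright inequality. First I would stack the independent probe vectors into $\vz = (\vomega_1^\top, \ldots, \vomega_N^\top)^\top \in \R^{nN}$, whose entries are independent, mean zero, and sub-Gaussian with $\max_i \|z_i\|_{\psi_2} \leq K_\psi$. Writing $\vM = \vI_N \otimes (\vH\vQ)$ for the block-diagonal matrix carrying $N$ copies of $\vH\vQ$, I would observe that
\[ N\,\widehat{\trace}(\vH\vQ) = \sum_{j=1}^N \vomega_j^\top (\vH\vQ)\vomega_j = \vz^\top \vM \vz, \qquad \mathbb{E}[\vz^\top \vM \vz] = N\,\trace(\vH\vQ), \]
the mean following from the unbiasedness computation already recorded above. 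Since the estimator is a quadratic form, only the symmetric part $\vM_s = \tfrac{1}{2}(\vM + \vM^\top) = \vI_N \otimes \tfrac{1}{2}(\vH\vQ + (\vH\vQ)^\top)$ contributes, i.e. $\vz^\top \vM \vz = \vz^\top \vM_s \vz$; this is precisely the step that accommodates the \emph{nonsymmetric} matrix $\vH\vQ$ arising from the square-root-free estimator.

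Next I would apply the Hanson--Wright inequality \cite[Thm.~6.2.1]{vershynin2018high} to $\vz$ and $\vM_s$, giving
\[ \mathbb{P}\{|\vz^\top \vM_s \vz - \mathbb{E}[\vz^\top \vM_s \vz]| \geq s\} \leq 2\exp\left(-c\,\min\left\{\tfrac{s^2}{K_\psi^4 \|\vM_s\|_F^2}, \tfrac{s}{K_\psi^2 \|\vM_s\|_2}\right\}\right). \]
The essential computations are the block-diagonal norm identities $\|\vM_s\|_F^2 = N\,\|\tfrac{1}{2}(\vH\vQ+(\vH\vQ)^\top)\|_F^2 \leq N\,\|\vH\vQ\|_F^2$ and $\|\vM_s\|_2 = \|\tfrac{1}{2}(\vH\vQ+(\vH\vQ)^\top)\|_2 \leq \|\vH\vQ\|_2$, where the final inequalities follow from the triangle inequality and bound the symmetric-part norms by those of the full matrix (so the stated bound in terms of $\vH\vQ$, rather than its symmetric part, remains valid). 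Substituting $s = Nt$, so that $|\vz^\top\vM_s\vz - N\trace(\vH\vQ)| \geq Nt$ is equivalent to $|\widehat{\trace}(\vH\vQ) - \trace(\vH\vQ)| \geq t$, cancels one factor of $N$ in the Frobenius ratio and leaves a linear factor in the spectral ratio, yielding exactly \eqref{eqn:probtrace} with $C_{\rm HW}=c$.

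For the sample-complexity claim I would set the relative tolerance $t = \epsilon\,\trace(\vH\vQ)$ (positive, since $\vH_\vQ$ is SPD) and demand that the right-hand side of \eqref{eqn:probtrace} be at most $\delta$, i.e. that $C_{\rm HW} N \min\{\cdot,\cdot\} \geq \log(2/\delta)$. Requiring $C_{\rm HW}N$ times \emph{each} of the two terms in the minimum to separately exceed $\log(2/\delta)$ produces two lower bounds on $N$; taking $N$ at least their sum certifies that the minimum is large enough, and factoring out the common quantity $K_\psi^2\log(2/\delta)/(C_{\rm HW}\epsilon^2)$ recovers the displayed condition on $N_{\rm samp}$.

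I expect the main obstacle to be bookkeeping rather than conceptual: carefully tracking the powers of $N$ and $K_\psi$ through the Kronecker norm computation and the $s = Nt$ rescaling, and verifying that symmetrization does not inflate the norms (which is what licenses phrasing the bound in terms of the nonsymmetric $\vH\vQ$). The sole external ingredient is the Hanson--Wright inequality, which I take as given.
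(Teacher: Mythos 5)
Your proposal is correct and follows essentially the same route as the paper's proof: both stack the probes into one long vector, apply the Hanson--Wright inequality to the block-diagonal matrix built from $N$ copies of $\vH\vQ$, use the block structure to compute the Frobenius and spectral norms, and then obtain the sample-complexity bound by setting $t = \epsilon\,\trace(\vH\vQ)$ and solving for $N$ (your ``sum of the two separate lower bounds'' is algebraically identical to the paper's use of $\min\{x,y\} \geq xy/(x+y)$). The only cosmetic differences are that the paper scales the block-diagonal matrix by $1/N$ up front rather than rescaling $s = Nt$ at the end, and that it invokes Vershynin's Theorem 6.2.1 directly on the nonsymmetric matrix (which that theorem permits) instead of passing through the symmetric part as you do --- your symmetrization is valid but not needed.
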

The novelty in this result is that it does not require $\vK$ to be symmetric or positive semidefinite as is the standard assumption (see, e.g.,~\cite{cortinovis2021randomized} and references within). 
\begin{proof}
    See Appendix~\ref{ssec:trace}.
\end{proof}
In practice, we take $\vOmega$ to be a standard Gaussian random matrix, i.e., with independent and identically distributed entries drawn from $\mathcal{N}(0,1)$. {Another option is} to choose the entries of the random matrix to be independent Rademacher random variables (entries $\pm1$ with equal probability). The analysis in Proposition~\ref{prop:trace} applies to both distributions since both distributions are sub-Gaussian with sub-Gaussian norm $K_\psi = \mathcal{O}(1)$.

\begin{algorithm}[!ht]
    \begin{algorithmic}[1]
        \REQUIRE Matrices $\vH$, $\vQ$, $\vV_k$, $\vT_k$. Integers $k_{\max}$, $n_{\rm mc}$. 
        \STATE Draw $\vOmega \in \R^{n\times n_{\rm mc}}$ with i.i.d. entries from $\mathcal{N}(0,1)$
        \STATE Compute  $\vY = \vH\vQ\vOmega$
        \FOR {$k = 1,\dots,k_{\max}$}
        \STATE Compute $\vY_{k} = \vY - \vV_k\vT_k\vV_k^\top\vQ\vOmega$
        \STATE Estimate $\widehat\xi_k  = \trace(\vOmega^\top\vY_k)/n_{\rm mc}$
        \ENDFOR
        \RETURN Estimate $\{\widehat\xi_k\}_{k=1}^{k_{\max}}$.
    \end{algorithmic}
    \caption{Monitoring accuracy of the genGK approximations}
    \label{alg:monitor}
\end{algorithm}

\paragraph{Monitoring error.} Next, we explain how to estimate $\xi_k$ using the Monte Carlo trace estimator. First consider $\xi_k$, which can be written using the cyclic property of the trace as 
\[ \xi_k = \trace(\vH_{\vQ}) - \trace(\vH_{\vQ}^{(k)}) = \trace(\vH\vQ - \vV_k\vT_k\vV_k^\top \vQ).\]
Note that in this formulation, we do not need to work with the square roots of $\vQ$. Applying the Monte Carlo trace estimator with $\vOmega$ as a standard Gaussian random matrix, we can estimate $\xi_k$ as 
\[ \widehat\xi_k : = \frac{1}{n_{\rm mc}}\trace( \vOmega^\top (\vH\vQ - \vV_k\vT_k\vV_k^\top \vQ)\vOmega). \] 
Note that this algorithm only requires matrix-vector products with $\vH$ and $\vQ$ and the genGK relationships.  The details and efficient implementation of this estimator are given in Algorithm~\ref{alg:monitor}. In practice, we take the number of Monte Carlo samples $n_{\rm mc} = 10$. Finally, we use the estimator 
\[ \text{err}_{\rm mc} := \frac12 \left[ \widehat\xi_k  + \beta_1^2\frac{\widehat\xi_k}{1 + \widehat{\xi}_k}\right],\]
to estimate the error in the objective function. Note that while the upper bound in Proposition~\ref{prop:genGK_bound} no longer holds with the Monte Carlo estimators, it can still be used to monitor the accuracy as an error indicator.

\section{Numerical Experiments}
\label{sec:numerics}

In the following, we outline experiments corresponding to problems in one and two dimensions that arise in inverse problems. Using our approach, we demonstrate that approximations to both the objective function $\calF$ and its gradient $\nabla \calF$ using the genGK relations can lead to hyperparameter estimates that result in accurate reconstructions.
We also show that one can effectively inform the selection of the bidiagonalization parameter $k$ using the error bounds derived above. Tests of robustness are presented, and for the one-dimensional case, illustrations for the computational need for a framework such as ours are provided.

All experiments are performed using MATLAB, with optimizations done via \verb+fmincon+ with an interior point method~\cite{interior_point_1,interior_point_2,interior_point_3}.

\paragraph{Choice of priors and hyperpriors.} Although our approach is general, for concreteness we assume that the dimensionality of our hyperparameter space is three (i.e., $K = 3$). The first hyperparameter controls the variance of the noise. We assume that the noise is Gaussian, with zero mean and covariance $\vR(\vtheta) = \theta_1 \Id_{m}$.  We model the prior covariance matrix using the Mat\'{e}rn covariance family. This is a flexible family of covariance kernels that can be used to model a wide range of behaviors with a relatively few set of parameters.  The kernel is isotropic and is  defined using the covariance function 
\[ \calM_{\nu, \sigma^2, \ell}(r) \equiv \frac{\sigma^{2}}{2^{\nu - 1} \Gamma(\nu)} \left(  \sqrt{2 \nu}\frac{r}{\ell} \right)^{\nu} \calK_{\nu} \left(  \sqrt{2 \nu} \frac{r}{\ell} \right)\] 
where $\Gamma$ is the gamma function,  $\calK_{\nu}$ is the modified Bessel function of the second kind, and the positive parameters $\nu, \sigma^2, \ell $ represent the smoothness of the process, the prior variance, and the correlation length respectively. Given the covariance function $\calM_{\nu,\sigma^2,\ell}(r)$, the covariance kernel is given by $\kappa(\vx, \vy) = \calM_{\nu,  \sigma^2, \ell}(\|\vx - \vy\|_2)$. Given a set of points $\{\vx_j\}_{j=1}^n$ the covariance matrix $\vQ(\vtheta)$ can be constructed entrywise as 
\[ [\vQ(\vtheta)]_{i,j}  = \calM_{\nu,\theta_2^2, \theta_3}( \| \vx_i - \vx_j\|_2) \qquad 1 \leq i, j \leq n.\] 
Therefore, $\theta_2$ and $\theta_3$ represent the prior standard deviation and the correlation length. We do not estimate the smoothness $\nu$ as a part of the estimation process but assume that it is fixed. 
We also assume the prior mean $\vmu(\vtheta) = \vzero$. In all the numerical experiments, we use the FFT-based technique to compute matvecs in $\mathcal{O}(n\log n)$~\cite{NowakTenkleveCirpka}.

For the hyperpriors, following~\cite{bardsley2018computational}, we report experiments with two different choices: (P1) is the improper prior chosen as $\pi(\vtheta) \propto 1$; and (P2) is a Gamma prior with  
\begin{equation}
\pi(\vtheta) \propto \exp\left( - \sum_{j=1}^K\gamma \theta_i \right) \qquad \theta_i > 0, \quad 1 \leq i \leq K. 
\end{equation}
The parameter $\gamma$ is set to be $10^{-4}$ and chosen such that the probability density function is relatively flat over the parameter space. We experimented with other hyperpriors and obtained similar results.

\paragraph{Experimental setup.} To generate the noise we adopt the following procedure. A linear forward operator $\vA$ along with some exact true signal $\vs$ is constructed so that the uncorrupted data, $\vd_{*},$ is $\vd_{*} = \vA \vs.$ Then we generate a realization, $\vepsilon,$ of a Gaussian random vector $X_{\vepsilon} \sim \calN(\vzero, \Id_{m}).$ Next, we set $$\veta = \vepsilon \frac{\lambda_{\textrm{noise}} \| \vd_{*} \|_{2}}{\| \vepsilon \|_{2}},$$ where $\lambda_{\textrm{noise}} \in (0,\infty)$ represents a noise level parameter; by construction, it is such that $\| \veta \|_{2} = \lambda_{\textrm{noise}} \| \vd_{*} \|_{2}.$ Finally, we generate a vector of noisy data observations via $\vd = \vd_{*} + \veta.$ 

We compare the accuracy in terms of relative reconstruction error norms, defined as 
\[ \text{RE} = \frac{\|\vs - \widehat{\vs}\|_2}{\|\vs\|_2},\]
where $\vs$ is the true solution and $\widehat{\vs}$ is the approximation. The timing computations were performed on North Carolina State University's High Performance Computing cluster `Hazel' using MATLAB R2023a. Specifically, our computing resource utilized an Intel Xeon Gold 6226 microprocessor with 188 GB of RAM, 2 sockets, and 32 cores per socket.

\subsection{Application 1: Inverse heat transfer}

Our first experiment corresponds to a one-dimensional problem in heat conduction, the details of which are outlined in~\cite{Engl_RegularizationInverseProblems,ReguToolbox,Carasso_1dHeat,Elden_1,Brunner_Collocation}.  Let $L, \kappa >0,$ and define
\begin{align}
    K : [0,L]\times [0,L] \rightarrow \R : (t,s) \mapsto \frac{1}{\sqrt{ 4 \pi \kappa^2 }} \lrp{s - t}^{-3/2} \exp \left( - \frac{1}{4} \lrp{s - t}^{-1} \right),
\end{align}
and for any $t \in [0,L],$ define the integral operator
\begin{align}
    (T \varphi)(t) = \int_{0}^{t} K(t,s) \varphi(s) \, ds. \label{volterra_first_kind_eq}
\end{align}
with the parameter $\kappa = 1.$ The parameter $\kappa$ controls the degree of ill-posedness of the problem; here, $\kappa = 1$ yields an ill-posed problem, whereas $\kappa = 5$ gives a well-posed problem. The inverse problem consists of determining a function $f$ such that 
    $T f = g$, 
{where} $g$ is known. 

Using the Regularization Tools package~\cite{ReguToolbox}, a discretization of $T$, in the domain $\Omega = [0,1]$, is generated. We denote the discretized representation of $T$ as $\vA \in \R^{n\times n}$ and the approximation $\vs$ for the function $f$.  For the prior covariance, we choose the Mat\'ern covariance with $\nu = 3/2$ and for the hyperpriors, we used the improper prior (P1). The measurements were corrupted with $2\%$ additive Gaussian noise.

\paragraph{Experiment 1: Accuracy of objective function.}
In this experiment, we investigate the accuracy of the genGK estimates in computing the objective function at the optimal value of $\vtheta$ (this will be discussed in Experiment 2). 
In the left plot of Figure~\ref{fig::error_bounds_1D}, we plot the relative error of $\widetilde{\calF}_{k}$ for various $k$ (blue, solid line) and compare it against the estimated bound derived using Monte Carlo techniques with $n_{\rm mc} = 10$ (red, dashed line). 

\begin{figure}[!ht]
    \centering
    \includegraphics[scale=0.21]{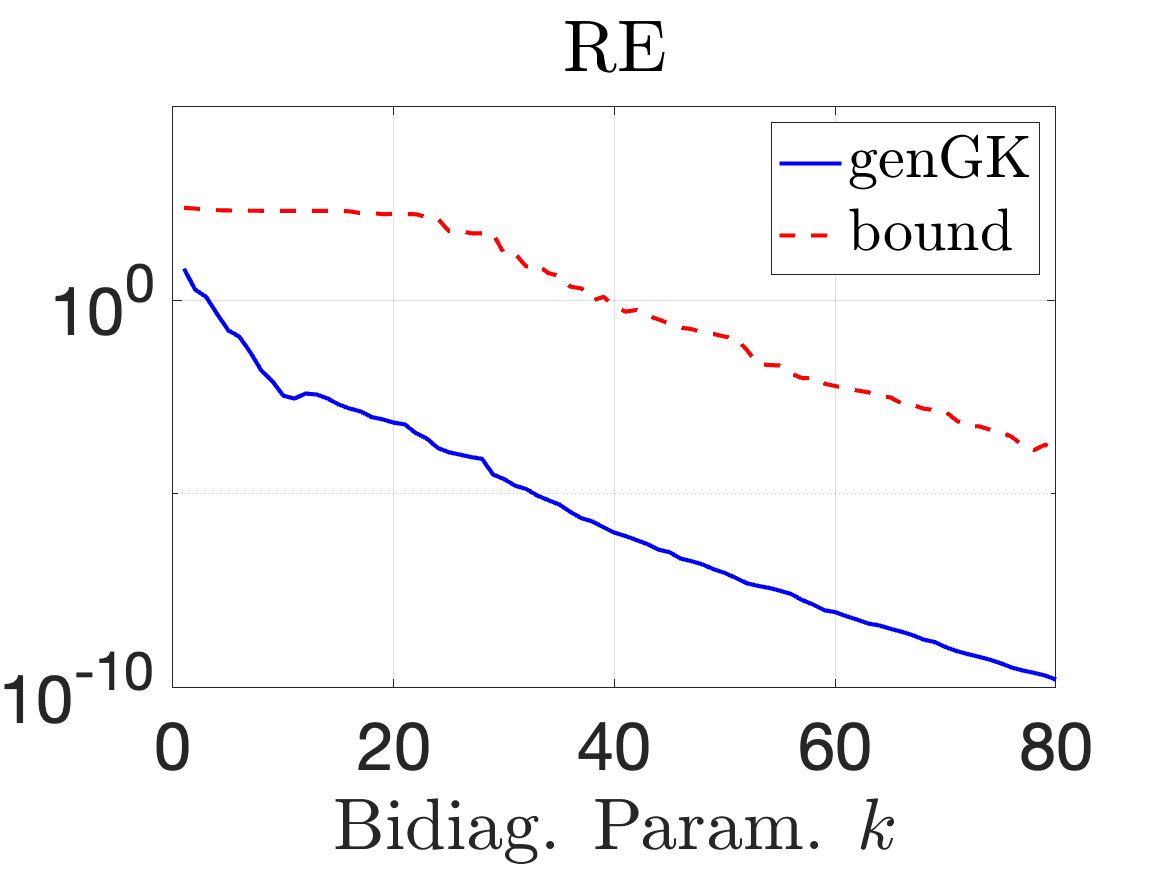}
    \includegraphics[scale=0.21]{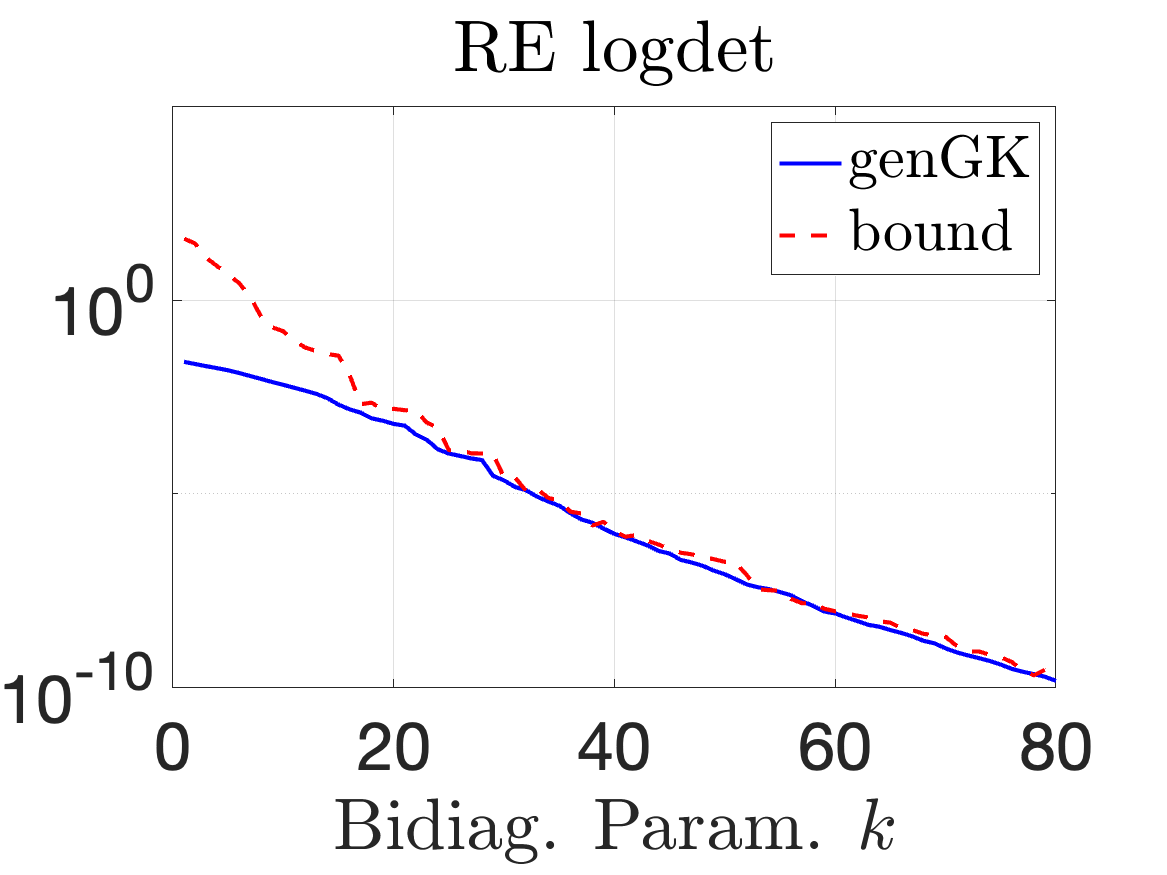}
    \includegraphics[scale=0.21]{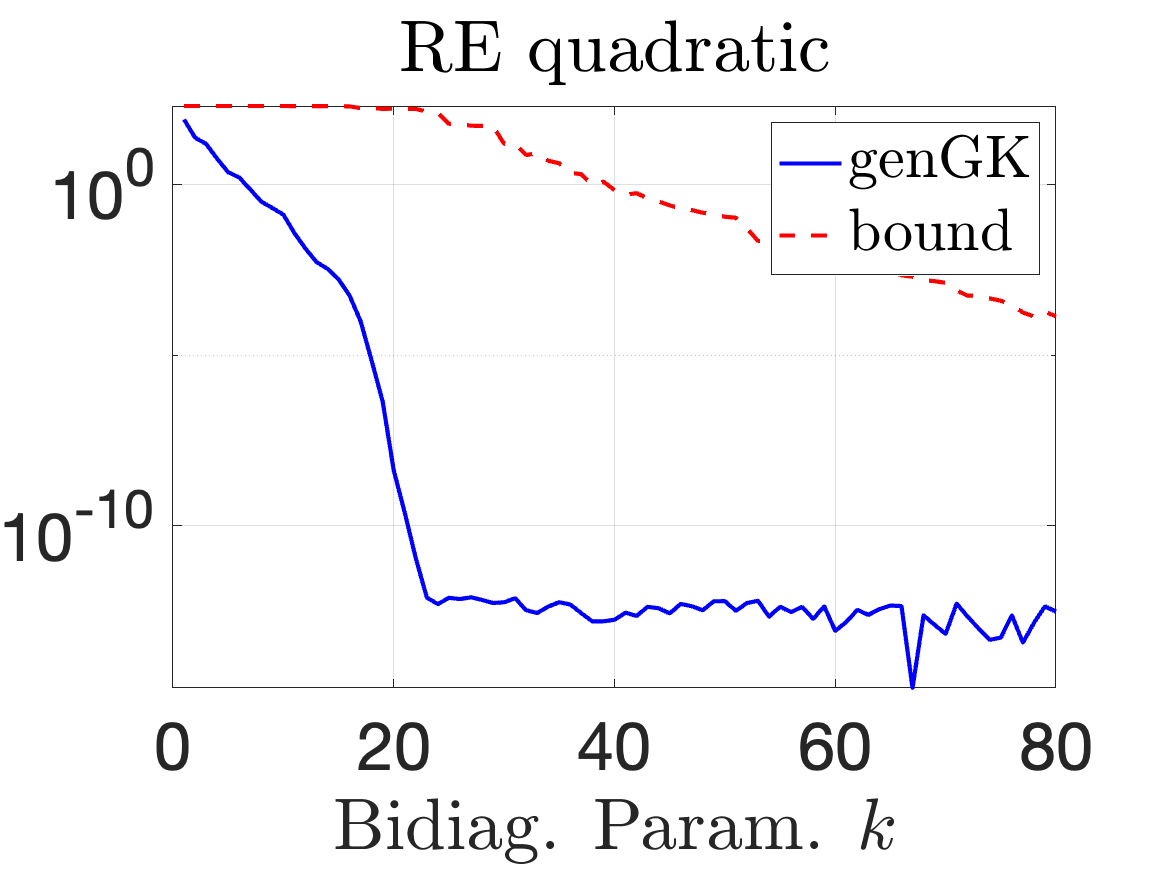}
    \caption{1D Heat: Relative errors and corresponding bounds. The left panel compares the overall accuracy of the objective function; the middle and right panels compare the errors of two components of the objective function: the log determinant and the quadratic terms respectively.}
    \label{fig::error_bounds_1D}
\end{figure}

We see from Figure~\ref{fig::error_bounds_1D} (left panel) that the computable bound reasonably tracks the observed errors over a large range of $k$ values. 
To investigate further, we separated the main contributions to the objective function: the log-determinant term $\frac12 \logdet(\vZ(\vtheta))$ and the quadratic term $\frac12\|\vA\vmu(\vtheta) -\vd\|_{\vZ(\vtheta)^{-1}}^2$. The relative errors in these terms are defined as 
\[ {\rm RE}_{\rm logdet} =  \frac{|  \logdet(\vZ) -  \logdet(\tilde{\vZ}_k)|}{|\logdet(\vZ)|} \qquad {\rm RE}_{\rm quad} =   \frac{|  \|\vA\vmu -\vd\|_{\vZ^{-1}}^2  - |\vA\vmu -\vd\|_{\tilde{\vZ}_k^{-1}}^2 |}{\|\vA\vmu -\vd\|_{\vZ^{-1}}^2}. \]
In the above expressions, we have suppressed the dependence on $\vtheta$. Correspondingly the appropriate computable bounds are ${\rm CB}_{\rm logdet} = \frac{ \hat\xi_k}{2\calF}$ and ${\rm CB}_{\rm quad} = \frac{\beta_1^2\hat\xi_k}{2\calF (1+\hat\xi_k)}$; see Section~\ref{ssec:monit}.
From the middle and right plots of Figure~\ref{fig::error_bounds_1D},
we observe that the bound for the log determinant term is better than for the quadratic term. This is explored further in Experiment 2. 

Moreover, we observe that the errors in the objective function approximations exhibit sharp decay with increasing values of $k$. To explain this behavior, we plot the generalized singular values of the operator. We can observe that the rank-$k$ SVD approximation produces comparable results to the genGK approximation. Note that the errors using these approaches are not guaranteed to be monotonically decreasing. Nevertheless, the sharp decay in the error can be understood by considering the decay in the generalized singular values in Figure~\ref{fig::GenSingVals}.   

\begin{figure}[!ht]
    \centering
    \includegraphics[scale=0.3]{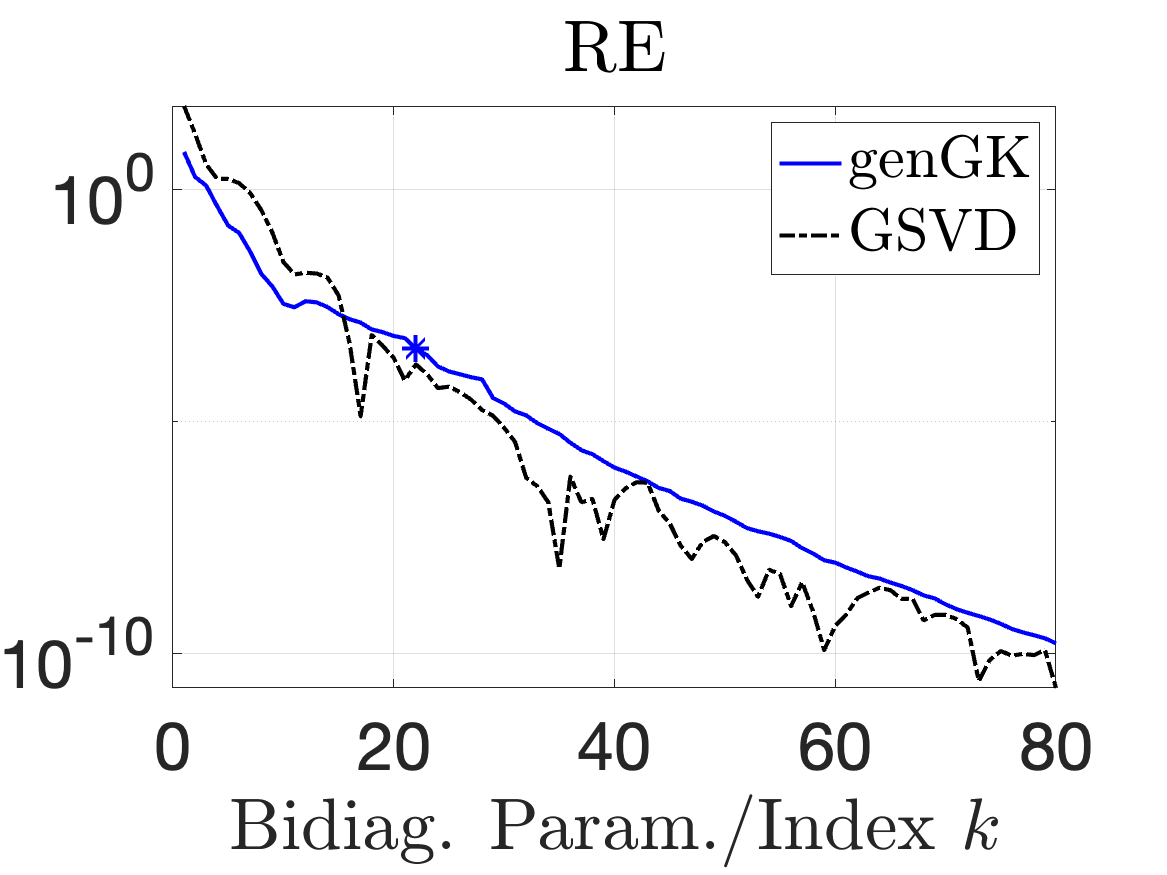}
    \qquad
    \includegraphics[scale=0.3]{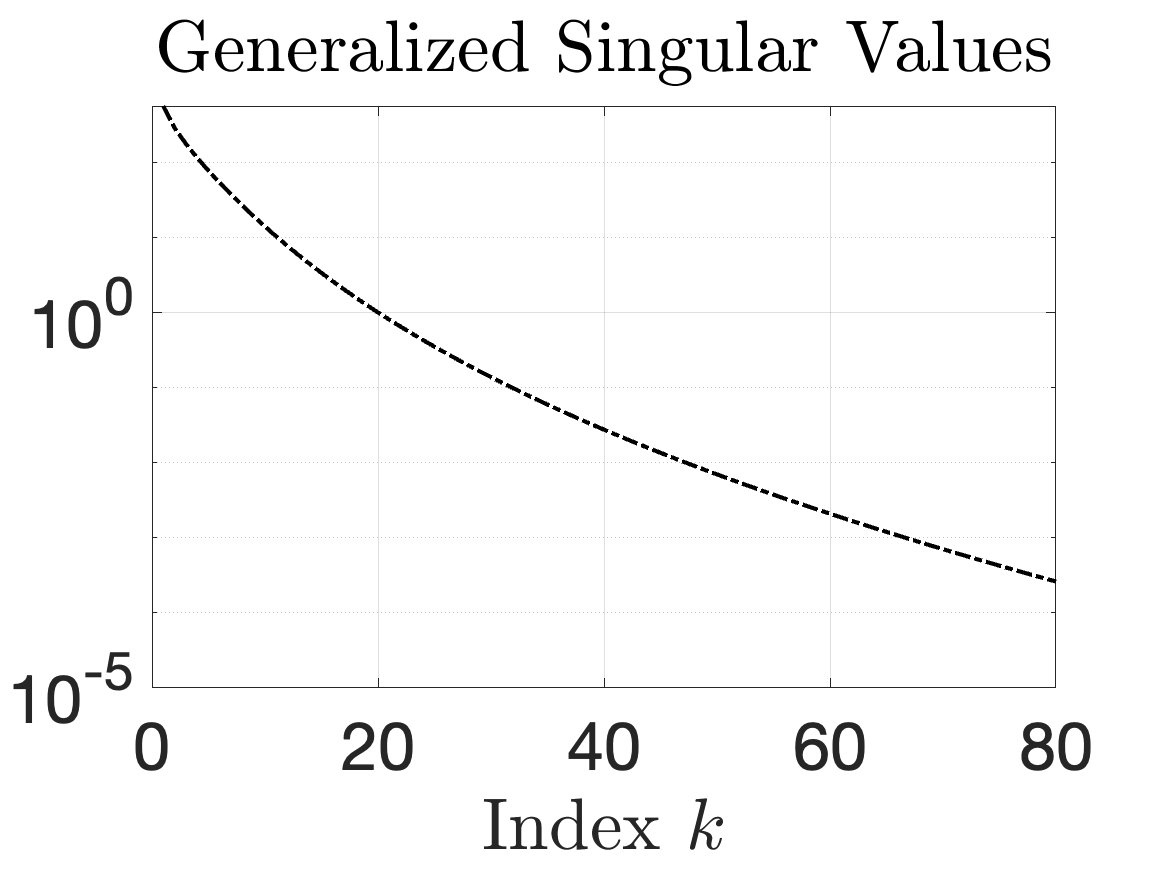}
    \caption{1D Heat: (left) Relative error in the objective function using genGK and GSVD, and (right) plot of singular values of $\widehat\vA$.}
    \label{fig::GenSingVals}
\end{figure}

The blue star in the left plot denotes the $k$ value that is used in our subsequent numerical experiments ($k = 22$). The relative error of the objective function at this point is approximately $10^{-4}$, with the relative error in the quadratic term being approximately $10^{-11}.$  An empirical justification for this choice is illustrated in the following sections.

\paragraph{Experiment 2: Recovery and accuracy along optimization trajectory.}
In this experiment, we discuss the performance of the optimization solver, the recovery, and the accuracy of the objective function along the optimization trajectory. We use the same setup as before and fix the number of genGK iterations to $k=22$.

\begin{figure}[!ht]
    \begin{center}
    \subfloat[\centering Noisy data with $2\%$ noise]{{\includegraphics[width=6cm]{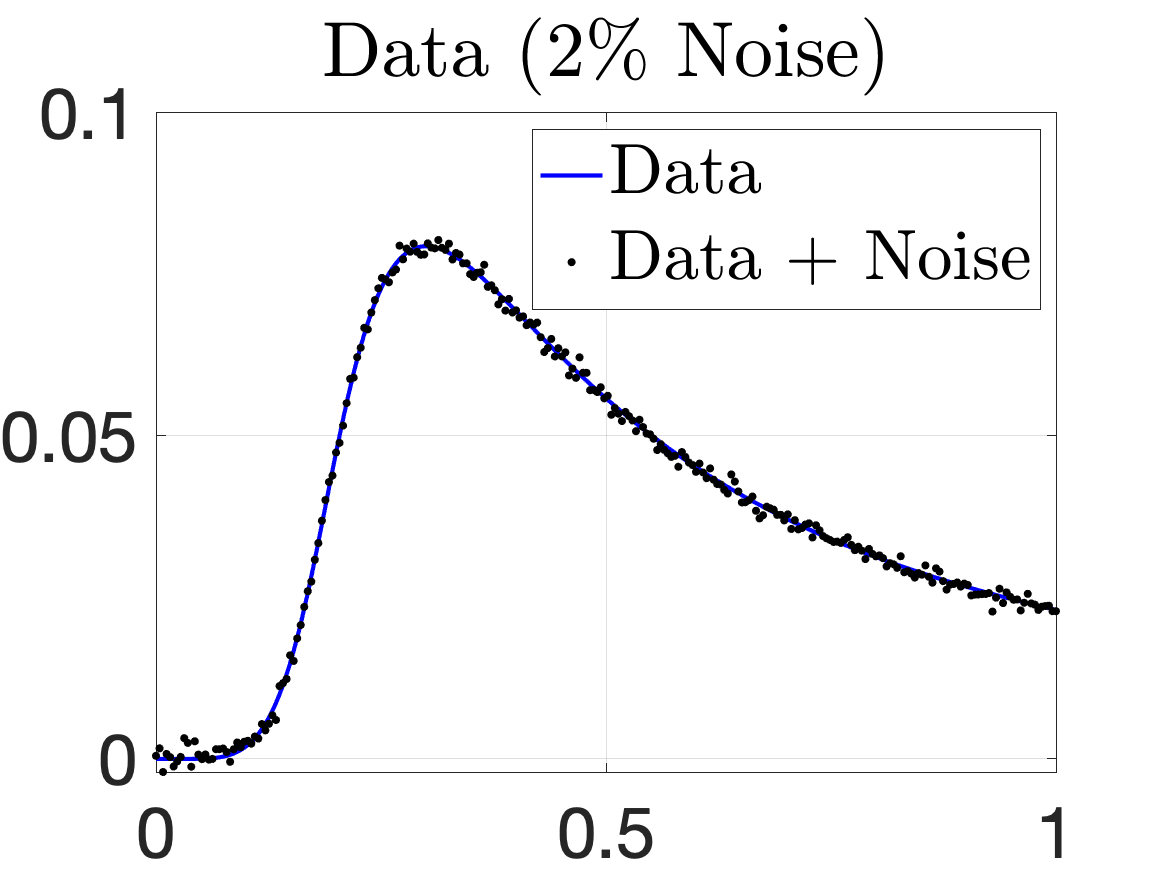} }}%
    \qquad
    \subfloat[\centering Reconstructed solution]{{\includegraphics[width=6cm]{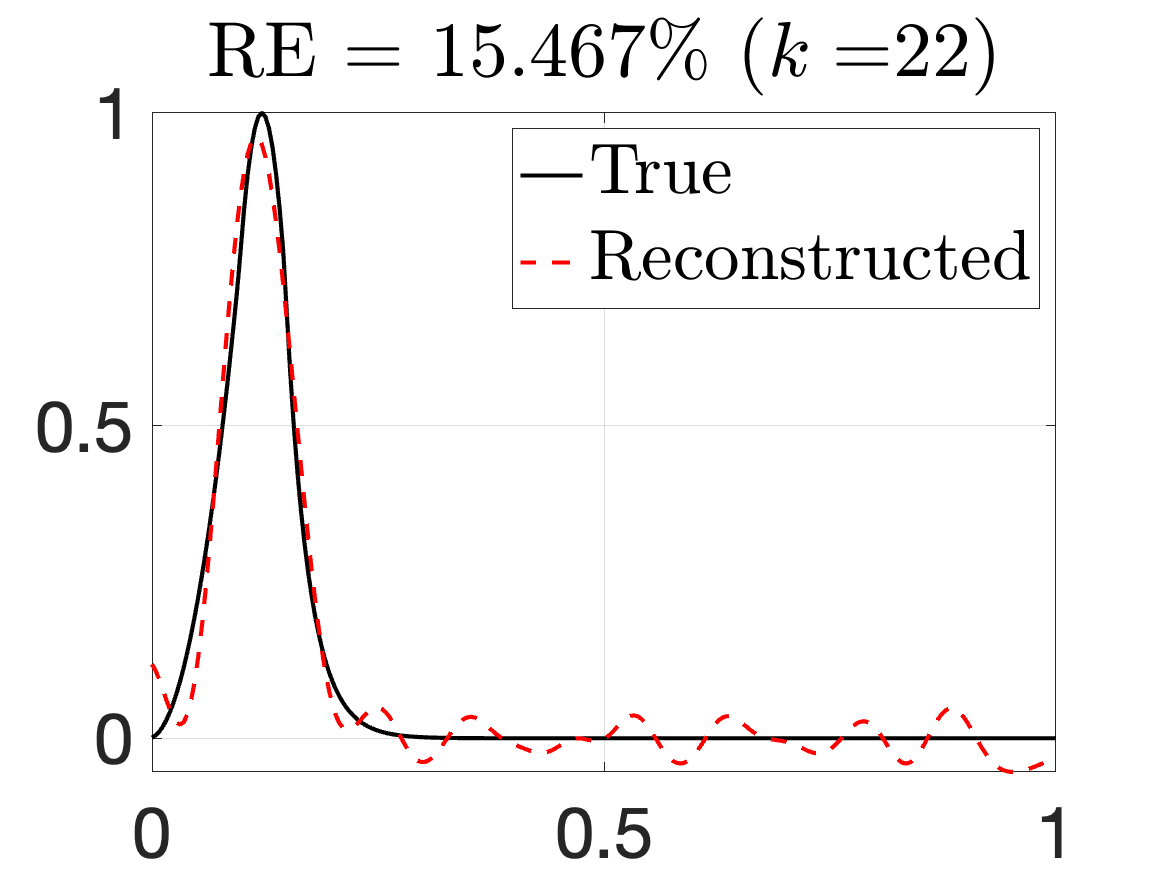} }}%
    \end{center}
    \caption{1D Heat: On the left, we provide the observations (with and without noise).  On the right, we provide the reconstruction, along with the true solution.}
    \label{fig:1d_recon}
\end{figure}

In Figure~\ref{fig:1d_recon}{(a)}, we plot the data without noise and with added noise of $2\%$. In Figure~\ref{fig:1d_recon}{(b)}, we plot a reconstruction alongside the true solution. The optimizer took $29$ iterations to converge with $69$ function and gradient evaluations (i.e., \verb+funcCount = 69+) and the optimal solution was found to be $$\vtheta^* = { (8.73\times 10^{-7}, 0.2562, 0.0566)^{\top}}.$$ 
The relative reconstruction error is found to be about $15.46\%$ at the optimal value of $\vtheta^*$.

\begin{figure}[!ht]
    \centering
    \includegraphics[scale=0.21]{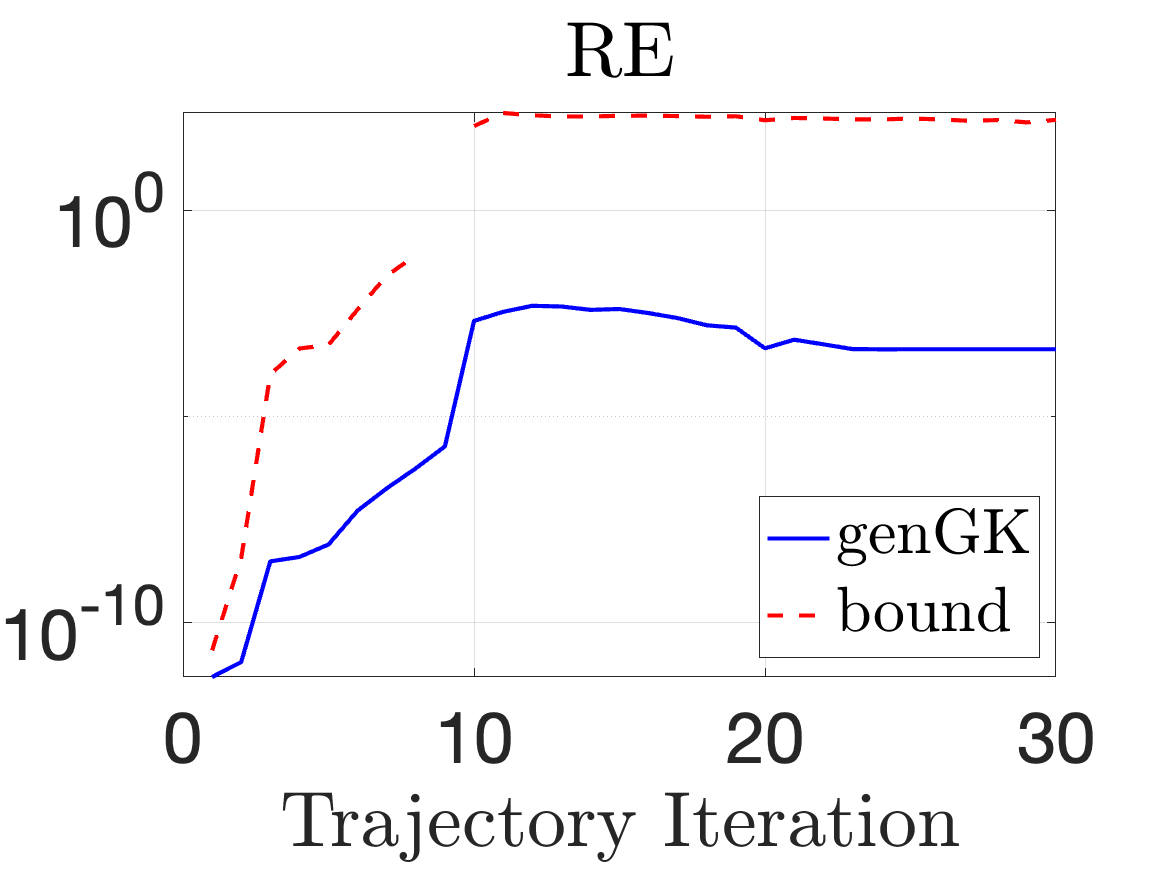}
    \includegraphics[scale=0.21]{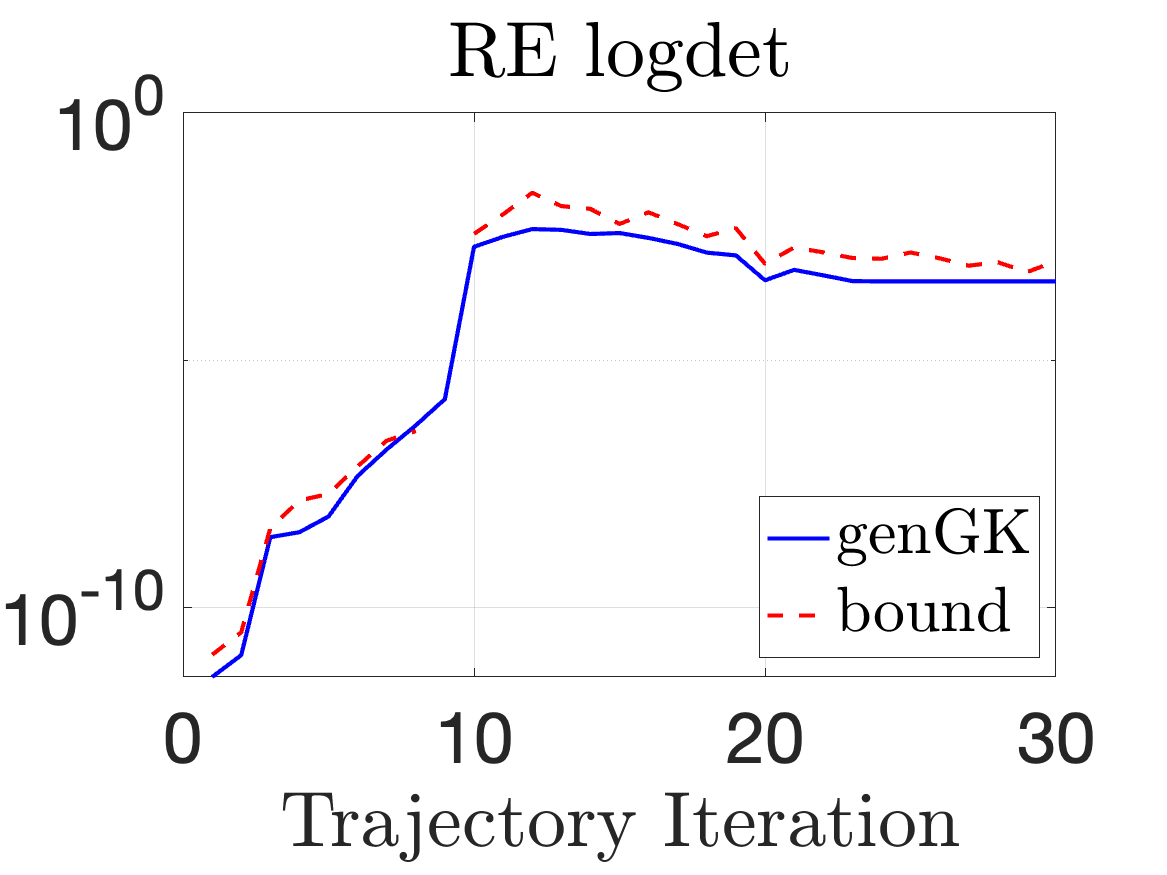}
    \includegraphics[scale=0.21]{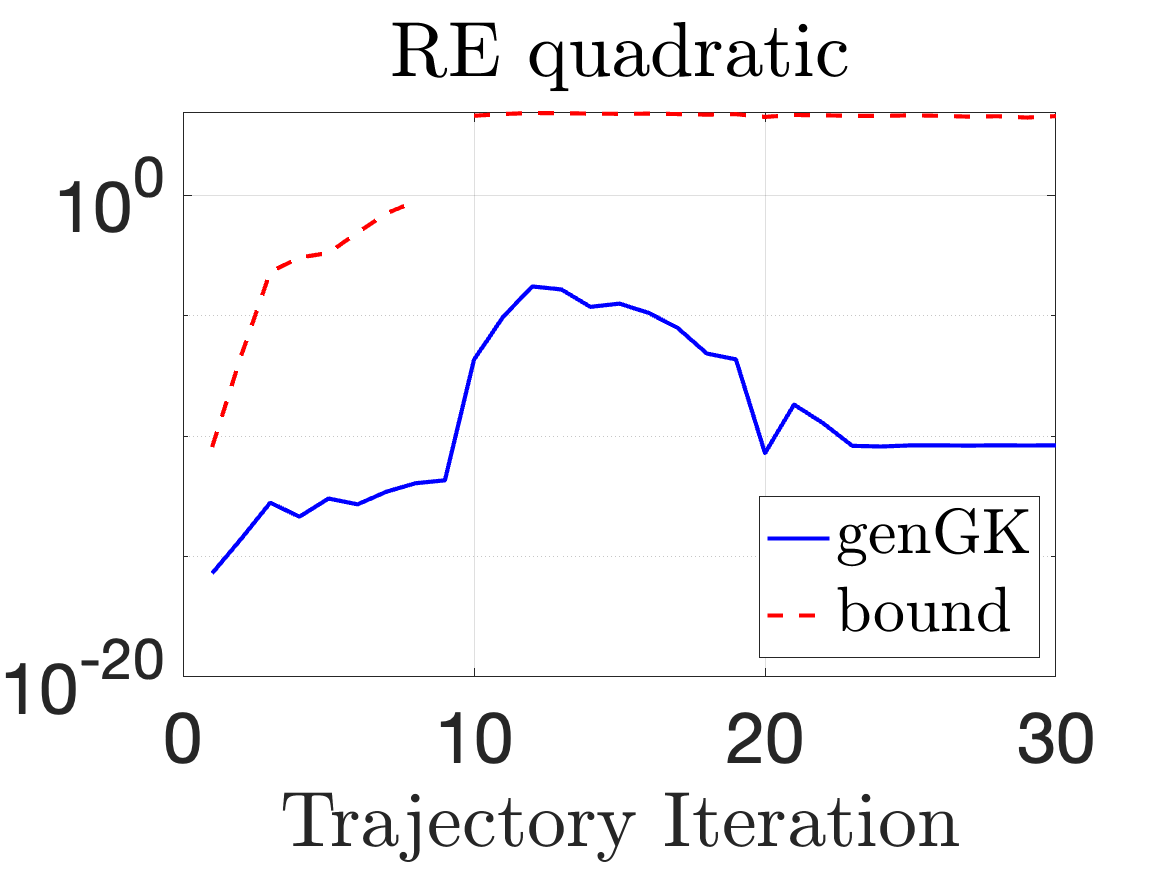}
    \caption{1D Heat: Relative errors and corresponding bounds along the optimization trajectory for $k=22$ and $n_{mc}=10$. The left panel compares the overall accuracy in the objective function, and the middle and right panels compare the errors of the log determinant and the quadratic term respectively.}
    \label{fig:1d_re_along_trajectory}
\end{figure}

Next, we investigate the accuracy of the objective function along the optimization trajectory and provide relative errors in Figure~\ref{fig:1d_re_along_trajectory}. We first observe that the relative error in the objective function shows a steady increase until about $10$ iterations after which it plateaus. The computable bound tracks the error well until about $10$ iterations after which it shows poor quantitative behavior, although appears to be qualitatively good.  We plot in the middle and right panels the error in the log determinant and quadratic terms respectively, with computable bounds provided. We observe that the log-determinant term dominates the error and shows similar behavior as the overall error. Also, the bound for the log-determinant term is quantitatively informative, whereas the bound for the quadratic term is poor, which explains why the overall bound is poor close to the optimal solution. We conclude by emphasizing two observations: the overall error estimate is reasonable, and the bound for the log-determinant term is reasonable, so this approach can be used as an error indicator.

We also investigated the robustness of the optimization procedure by choosing $100$ different values of the initial guess $\vtheta_0$. The initial guesses are generated randomly by perturbing each coordinate uniformly at random by $50\%$ of the optimal parameter. Once the initial guesses are determined, the optimization problem is solved for both the ``exact'' case (using $\calF$) and the approximate case (using  $\widetilde{\calF}_k$) with the value $k = 22.$ We find that the optimization solutions converge to the same optimal solution with the relative error in the resulting images between $14-15\%$ thus demonstrating the robustness of the genGK approximations.

\paragraph{Experiment 3: Computational time.}
\begin{figure}[!ht]
    \begin{center}
    \includegraphics[scale=0.25]{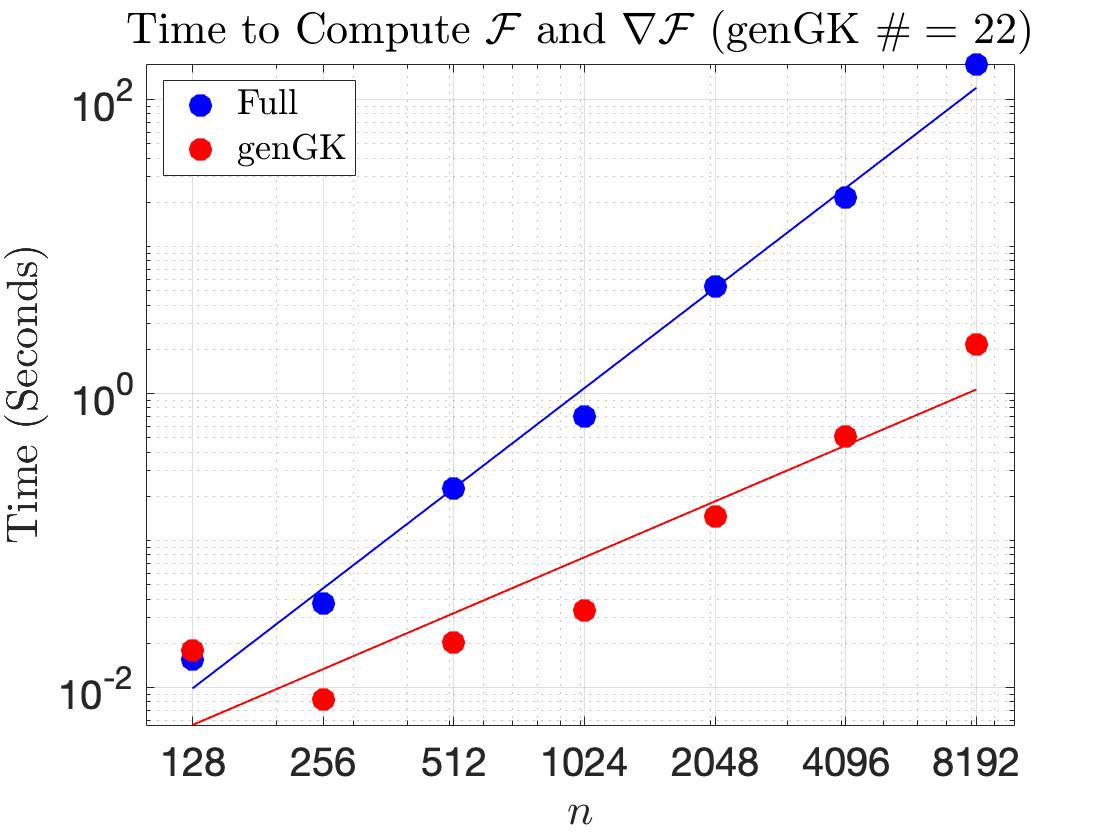}
    \caption{1D Heat: Wall clock time to compute the objective function and gradient pair versus the problem size $n$.}
    \label{fig:time_1d}
    \end{center}
\end{figure}

To demonstrate the computational efficiency of our approach, we compute the CPU time required to compute the objective function and the gradient with increasing numbers of unknowns $n$. In Figure~\ref{fig:time_1d}, we provide the wall clock time needed to compute the objective function-gradient pair for both the exact and approximate cases. For the case $n = 8192$, the genGK approximation provides an approximation faster than the ``full'' case by a factor of $81$.

\subsection{Application 2: Seismic tomography}\label{ssec:seismic}
This experiment corresponds to a two-dimensional problem in seismic travel-time tomography. Typically, this class of problems simulates geophysical situations in which measurements of the travel time of seismic waves are recorded between a collection of sources and detectors. Utilizing these measurements, one can reconstruct an image associated with the waves in some specified domain $\Omega = [0,1]^2$. We use the IR Tools~\cite{IR_Tools} package to generate the test problem. Using the \verb+[A, d, s, info] = PRseismic(n, options)+ command from the package, we obtain a forward operator, $\vA,$ a true solution, $\vs,$ and a right-hand-side of true observations, $\vd$. 
For the experiments below (via the input \verb+options+), we use the following settings: \verb|phantomImage = `smooth'|, \verb|wavemodel = `ray'|, \verb|s = 32|, and \verb|p = 45|.
This setup yields a forward operator with $1440$ measurements. 
As before, $2\%$ noise is added to the data to simulate measurement error.   Additionally, we choose the hyperprior (P2), $\pi \lrb{ \vtheta } \propto \exp\lrp{-\gamma \sum_{i=1}^3 \theta_i}$ with $\gamma = 10^{-4},$ and the covariance matrix is constructed using the Mat\'ern kernel with $\nu =3/2$. We begin with a similar set of experiments to the first application. We take the number of unknowns to be $4096$ since it is easier to compare the accuracy against the true objective function for a smaller problem. Contrary to application 1, this application is a significantly under-determined system.  Moreover, in Section \ref{sub:2param} we see how a 2-hyperparameter setting can be used for this problem.

\paragraph{Experiment 1: Accuracy of objective function.}
\begin{figure}[!ht]
    \centering
    \includegraphics[scale=0.21]{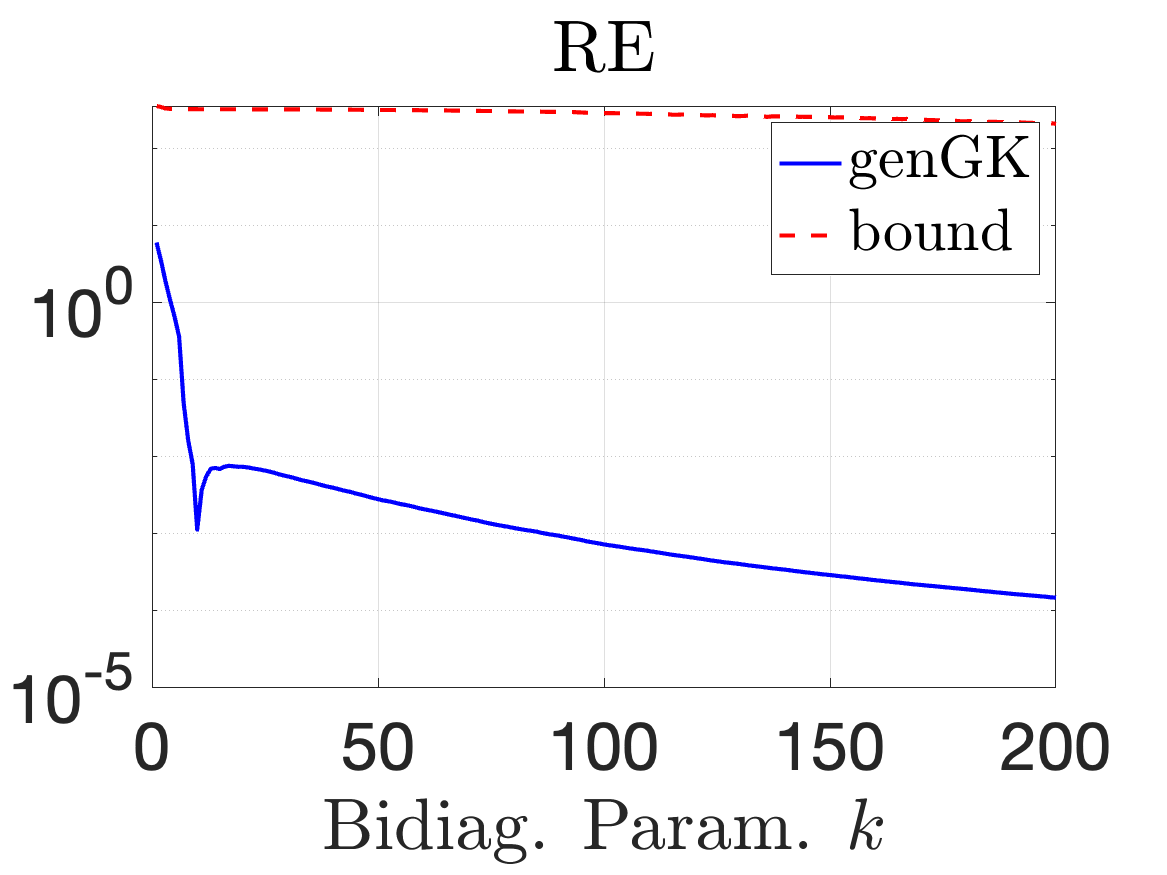}
    \includegraphics[scale=0.21]{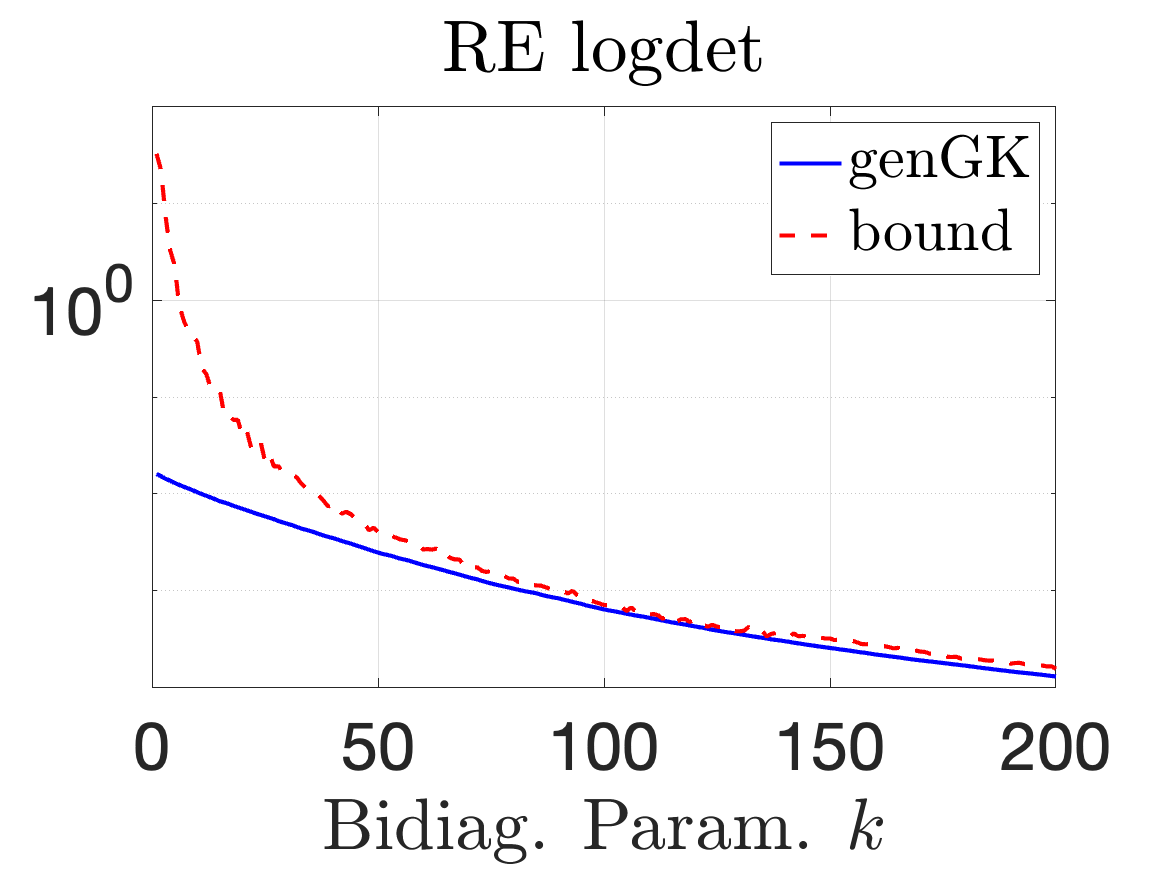}
    \includegraphics[scale=0.21]{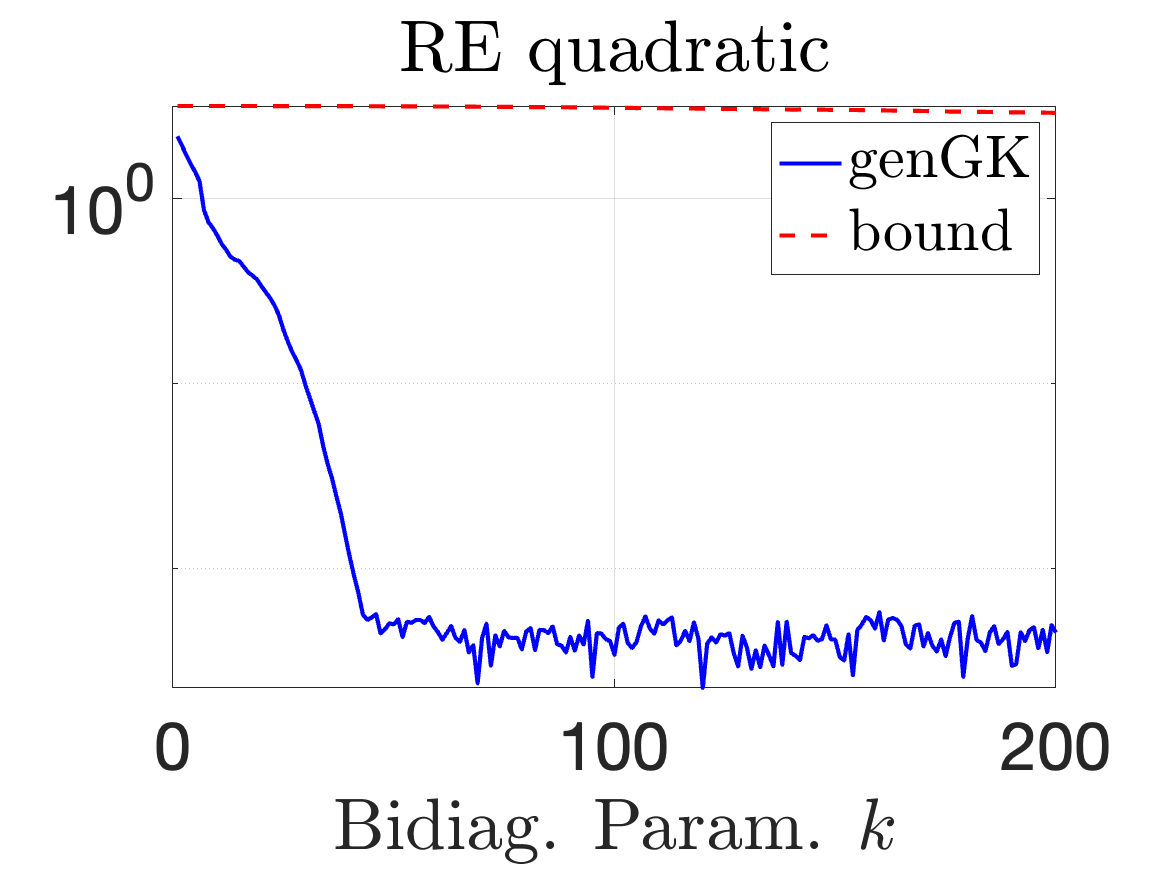}
    \caption{Seismic tomography: Relative errors and corresponding bounds for the objective function (left), log determinant term (middle), and quadratic term (right). }
    \label{fig::error_bounds_2D}
\end{figure}

\begin{figure}[!ht]
    \centering
    \includegraphics[scale=0.3]{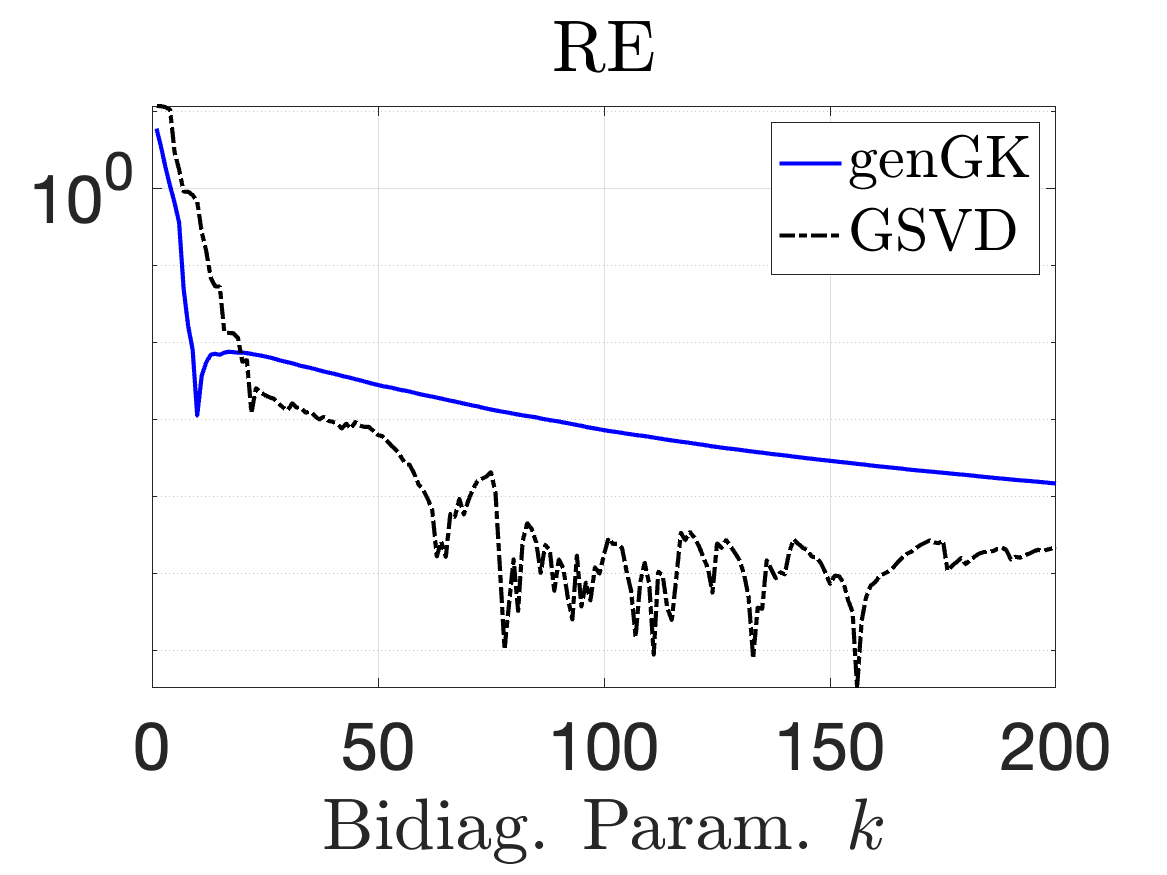}
    \qquad
    \includegraphics[scale=0.3]{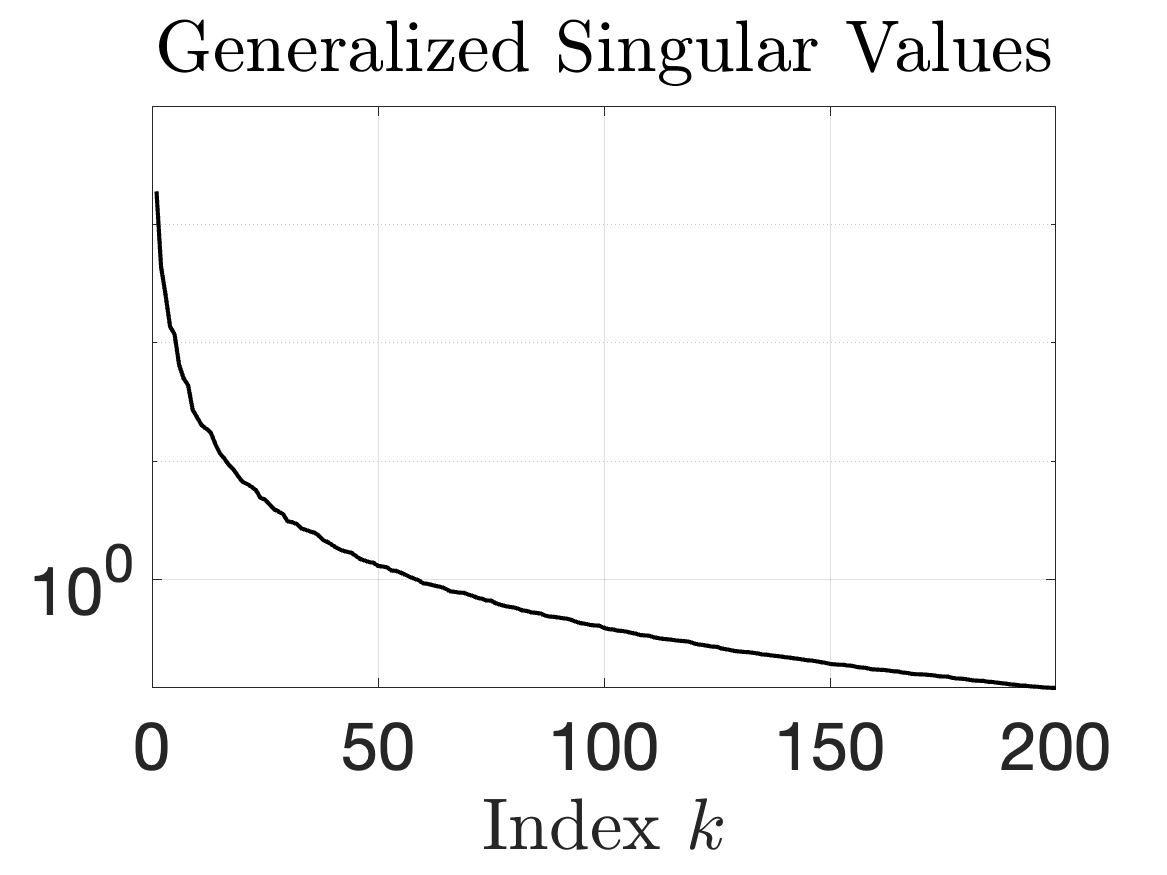}
    \caption{Seismic tomography: (left) Relative error in the objective function using genGK and GSVD, and (right) plot of singular values of $\widehat\vA$.}
    \label{fig::GenSingVals_2D}
\end{figure}

In Figure \ref{fig::error_bounds_2D}, we plot the relative error of the objective function at a point ${\vtheta}^*$ in blue and its estimated bound in red with $n_{mc}=100$. Similar to the 1D Heat example and as suggested by the decay of the singular values in Figure \ref{fig::GenSingVals_2D}, the relative errors exhibit a sharp decay with $k$; however, we see that the bound, while qualitatively good, is not accurate. By examining separately the two components, we observe that the bound for the log-determinant term is quite good while the bound for the quadratic term is not great. 
In subsequent numerical experiments, we used $k=200$ at which point the error in the objective function is about $10^{-5}$.

\paragraph{Experiment 2: Reconstruction and accuracy along optimal trajectories.}
The optimization procedure for estimating the hyperparameters took $23$ iterations and the number of objective function evaluations was $61$. The true and reconstructed solutions are provided in Figure~\ref{fig:2drecon}. The recovered image has about $3\%$ relative reconstruction error and similar qualitative features to the true solution. 

\begin{figure}[!ht]
    \centering
    \includegraphics[scale=0.15]{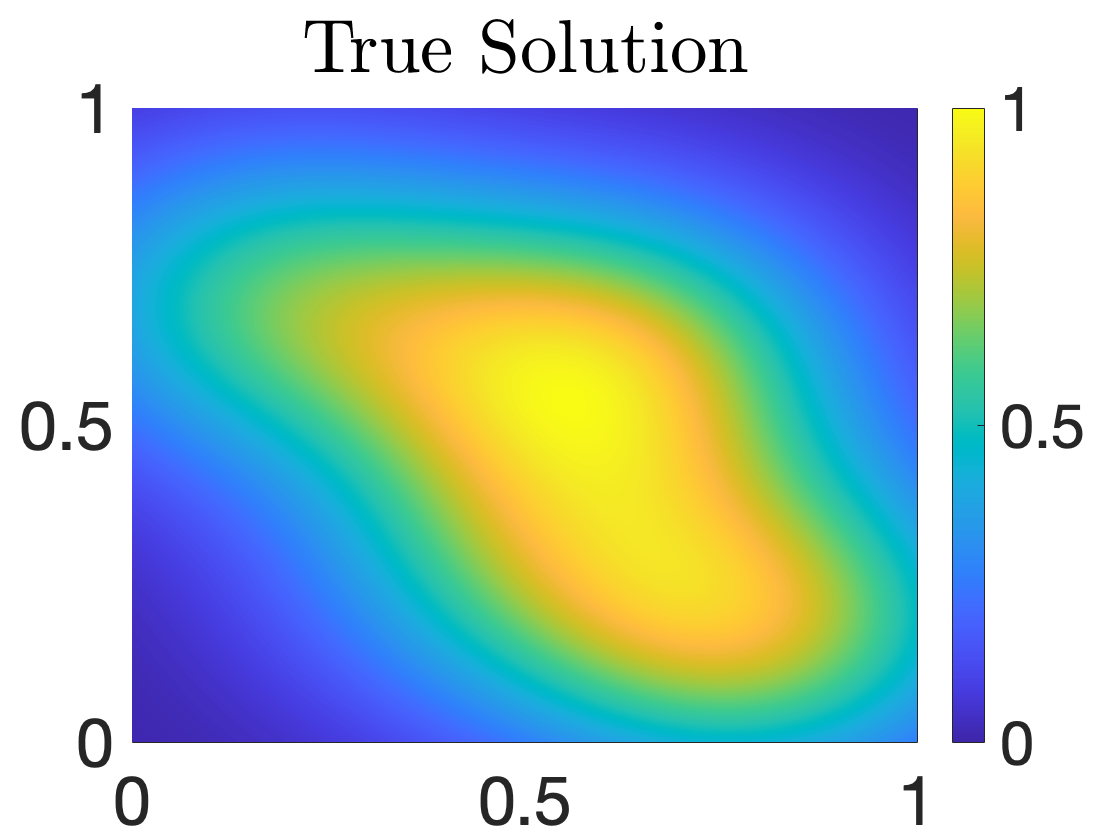}
    \includegraphics[scale=0.15]{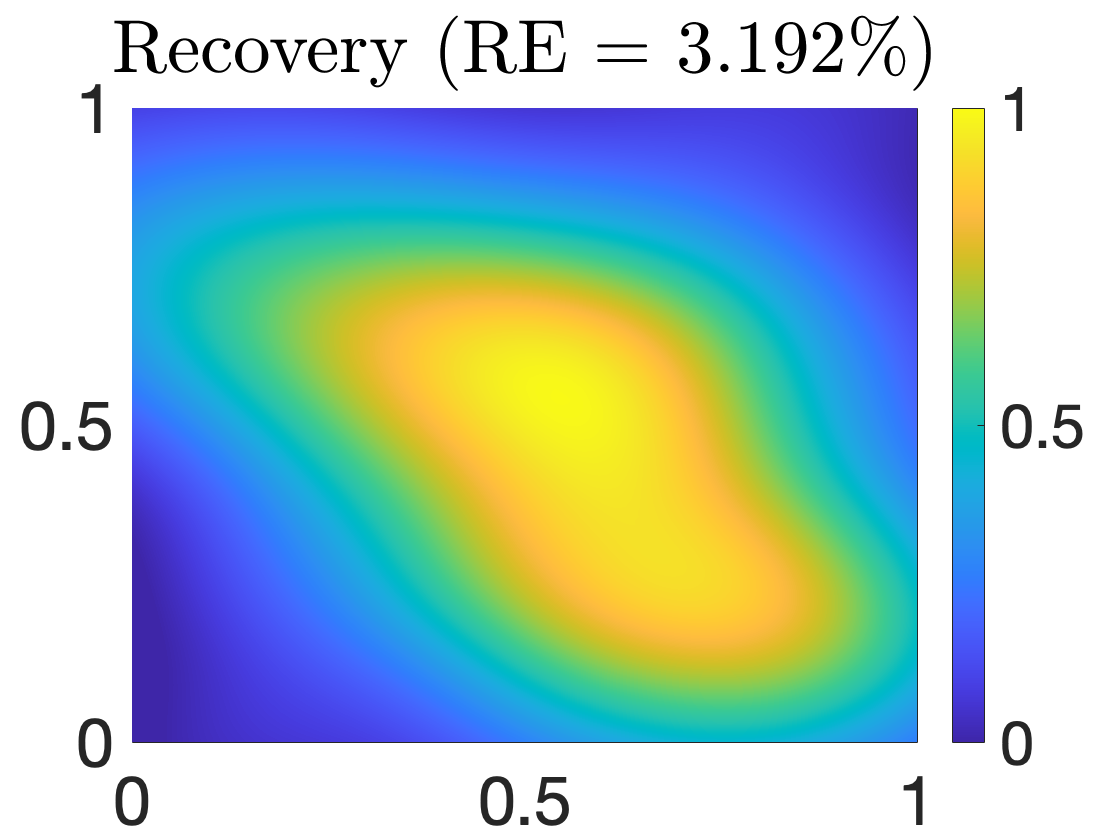}
    \caption{Seismic tomography: True solution and reconstruction obtained with computed hyperparameters. }
    \label{fig:2drecon}
\end{figure}

The previous experiment only considered approximations at a single set of parameter values for $\vtheta$. In Figure~\ref{fig:2dopttrajectory} we plot the relative errors of the objective function as well as the two components, log-determinant and quadratic, along the optimization trajectory.  As before, we see that the relative error is consistently good throughout the optimization trajectory. Looking more closely, we see that the relative error in the quadratic component is much smaller than the log determinant term (in fact, it is close to machine precision), suggesting that the genGK approximation is much better at approximating the quadratic term. Now considering the bounds, we see that the bound for the log determinant is very good both qualitatively and quantitatively, and as before the bound for the quadratic term is very poor. Therefore, we can use the bound for the log determinant as an indicator for the error in the objective function and the gradient. 

\begin{figure}[!ht]
    \centering
    \includegraphics[scale=0.21]{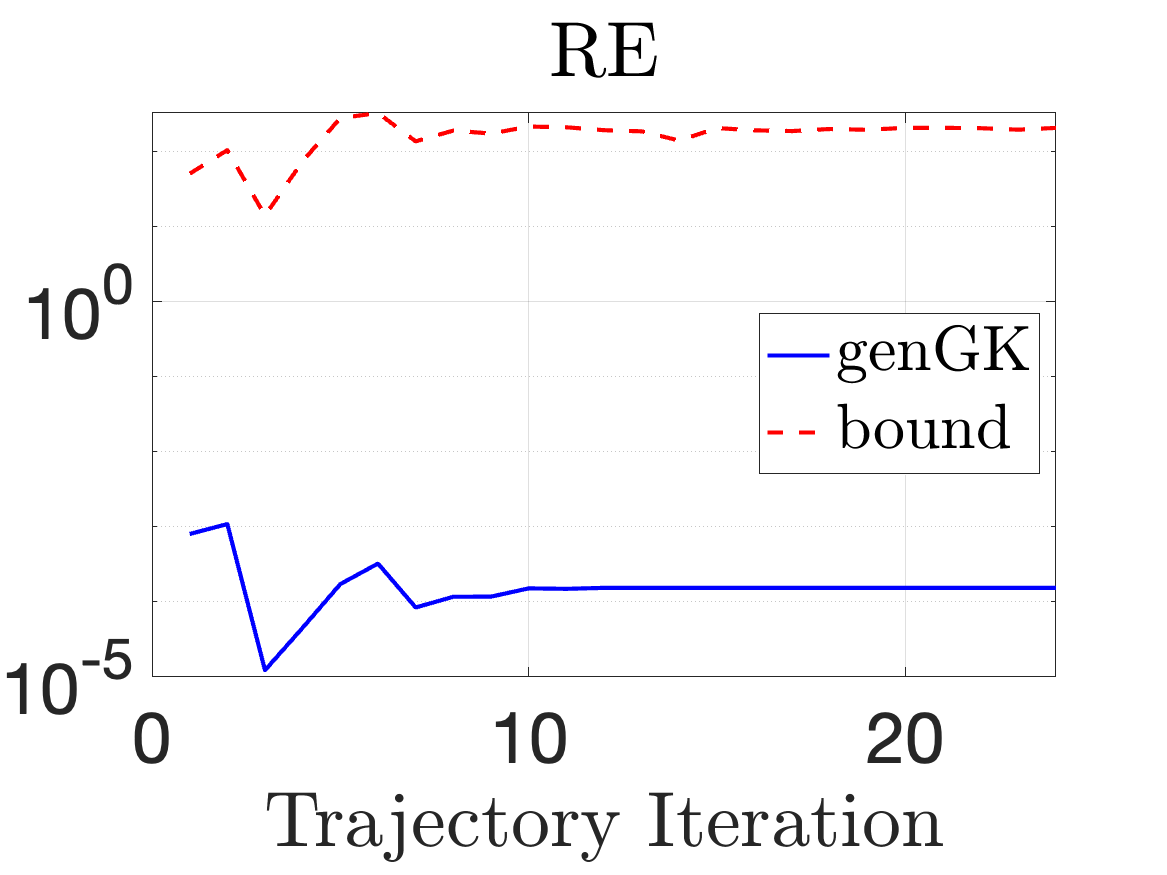}
    \includegraphics[scale=0.21]{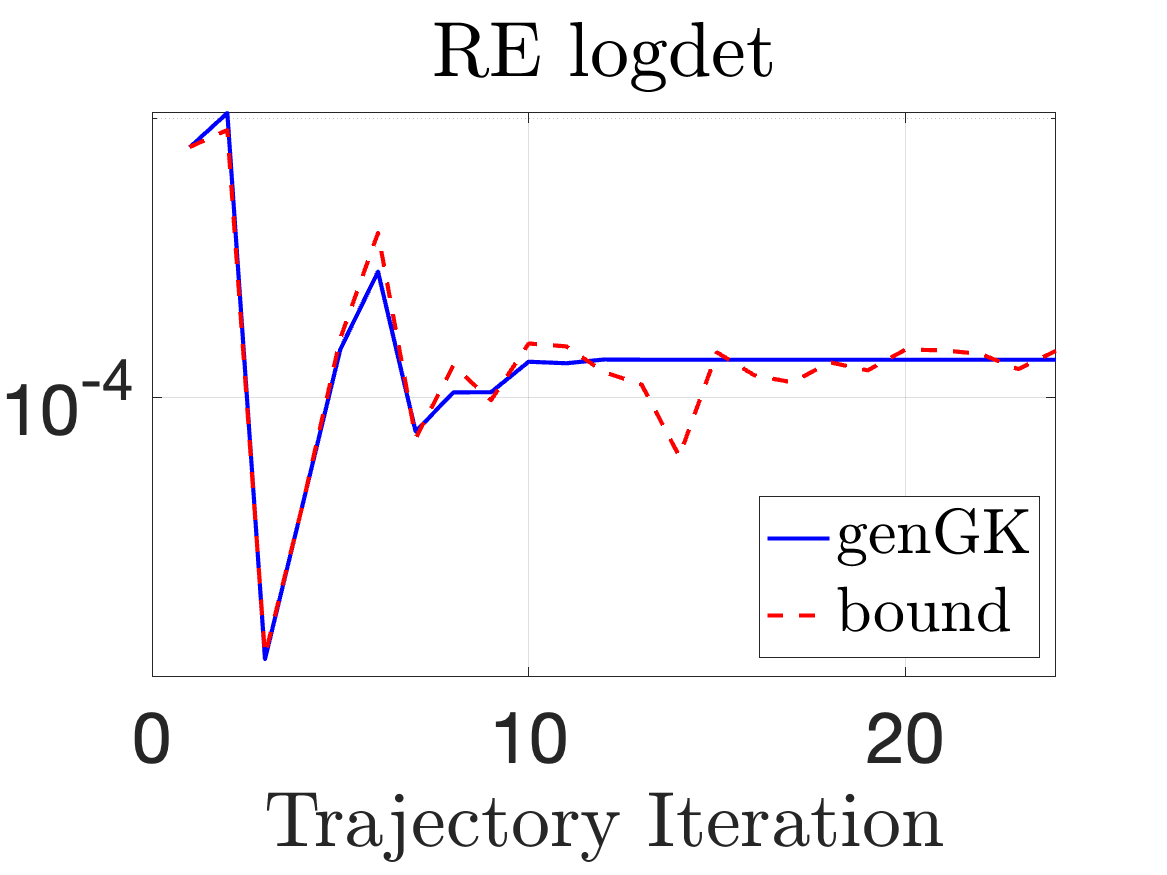}
    \includegraphics[scale=0.21]{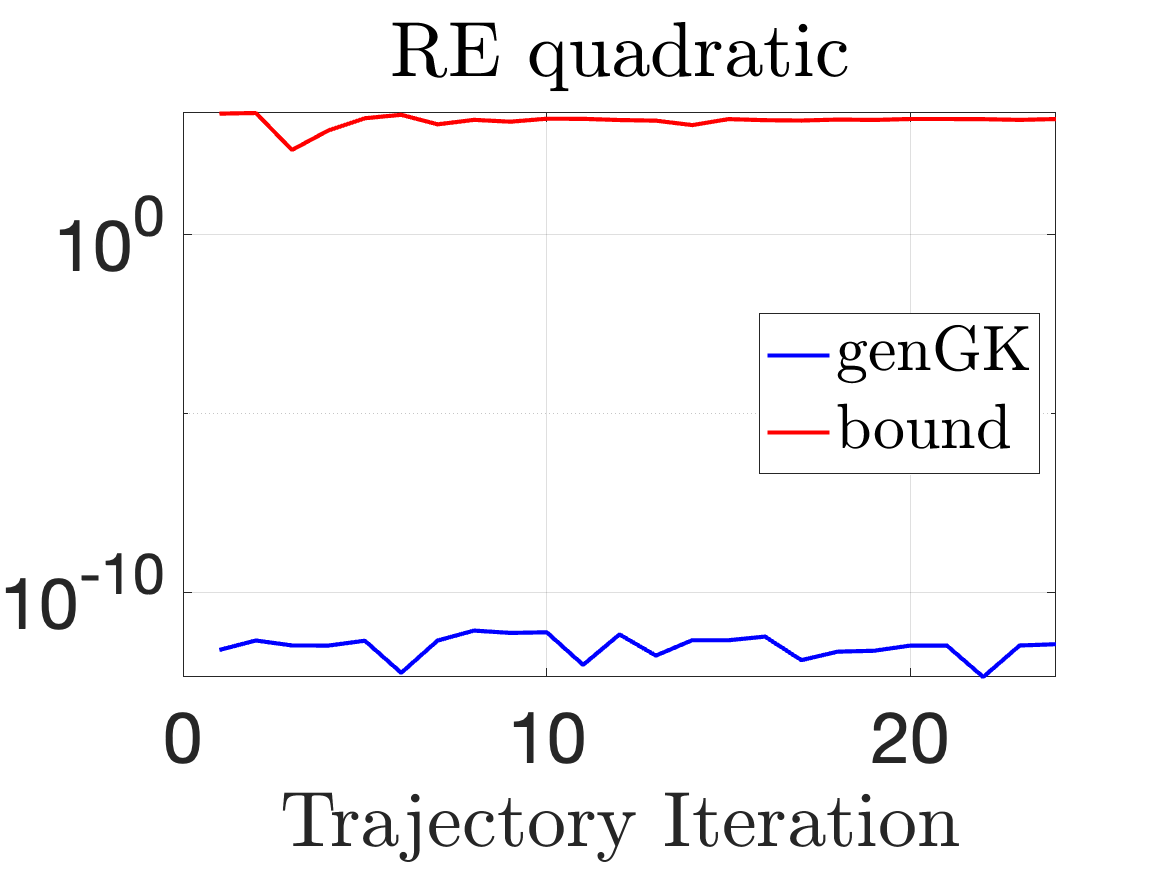}
    \caption{Seismic tomography: Relative errors and corresponding bounds along the optimal trajectory for $k=200$ and $n_{mc}=100$ samples. The left panel compares the overall accuracy of the objective function, and the middle and right panels compare the errors of the log determinant and the quadratic term respectively. }
     \label{fig:2dopttrajectory}
\end{figure}

\paragraph{Experiment 3: Computational time.}

In Figure~\ref{fig:time_2d} we compare the computational time of our approach against the full approach. We choose two values of $m$, namely $1440$ and $6400$ (with \verb|s=64| and \verb|p=100|). As before, we present the time to compute the objective function and the gradient for various numbers of unknowns $n$ from $2^8$ to $2^{16}$. We see that for both sets of measurements, the proposed approach that uses genGK (``Approximate'') is much faster than the full (``Exact'') approach. The difference is more pronounced for a larger number of measurements $m=6400$. 

These computations were performed on North Carolina State University's High Performance Computing cluster `Hazel' using MATLAB R2023a. The specifications are the same as those described in Experiment 3 of Section 4.1.
\begin{figure}[!ht]
    \begin{center}
    \includegraphics[scale=0.2]{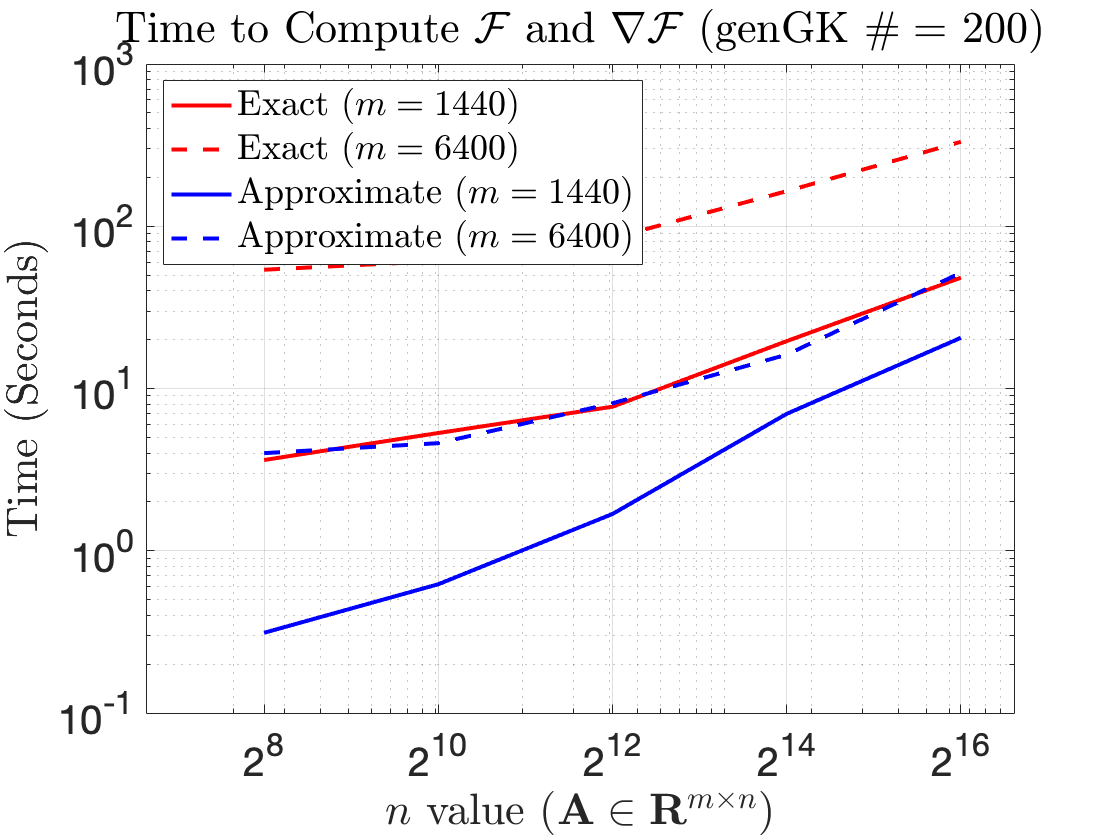}
    \caption{Seismic tomography: Wall clock time to compute the objective function and gradient for two different values of $m$: $1440$ and $6400$. }%
    \label{fig:time_2d}
    \end{center}
\end{figure}

\subsection{Two-Hyperparameter Setting}
\label{sub:2param}
In the previous examples, the parameter $\theta_1$ is related to the variance of the noise, the parameter $\theta_2$ is related to the variance of the prior, and $\theta_3$ is the correlation length of the prior covariance. Consider the setting where $\veta \sim \calN \lrp{\vzero, \theta_1 \Id_m },$ $\vs \sim \calN \lrp{ \vmu, \theta_2^2 \vQ_0 },$ and $\vQ_0 \in \R^{n \times n}.$ In this setting,  one has access to a reliable estimate for $\theta_3$, resulting in a fixed prior covariance matrix $\vQ_0$ and wishes to simply optimize over the parameters $\vtheta = \lrp{\theta_1, \theta_2} \in \R^2_{+}$. At first glance, it appears to be a special case of our approach, but this special case is of interest for two reasons: first, it allows us to compare against existing parameter selection methods that can recover a ratio of these parameters; and second, the computational cost of the optimization problem can be substantially lowered since the genGK relations can be precomputed in an offline stage. We elaborate on both points below.

To explain the first point, in~\cite{AKS_Chung_genHyBR} the genHyBR approach was considered for computing the MAP estimate
\[ \min_{\vs \in \mathbb{R}^n} \|\vd - \vA\vs\|_{\vR^{-1}}^2 + \lambda^2 \| \vs - \vmu\|^2_{\vQ_0^{-1}}, \]
with methods to automatically estimate the regularization parameter $\lambda$, e.g., using the Generalized Cross Validation (GCV) method, the Unbiased Predictive Risk Estimate (UPRE), the Discrepancy Principle, etc. The connection between this formulation and the present approach is that $\lambda^2 = 1/\theta_2^2$. A key distinction of our approach presented here is that we can recover both $\theta_1$ and $\theta_2$, whereas genHyBR can recover only the ratio $\sqrt{\theta_1}/\theta_2$ (assuming that $\theta_1$ is known a priori). We will present a comparison with genHyBR below.

To expand on the second point, we argue that the genGK process needs to be run only once, and the computed matrices can be reused during the optimization process. That is, we run Algorithm~\ref{algorithm:genGK} as genGK($\vA,\vI,\vQ_0,\vmu,\vd,k$) to obtain the matrices $\widehat{\vU}_{k+1}, \widehat{\vB}_{k}$, and $\widehat{\vV}_k$ that satisfy $\vA \approx \widehat\vU_{k+1} \widehat\vB_{k} \widehat\vV_k^{\top}$ with the orthogonality relations $\widehat\vU_{k+1}^{\top}  \vU_{k+1} = \Id_{k+1}$ and $\widehat\vV^{\top}_{k} \vQ_0  \widehat\vV_{k} = \Id_{k}.$ To obtain the genGK relations for any particular value of $\vtheta = \lrp{\theta_1, \theta_2}$ one simply has to perform the change of variables 
\[ \vU_{k+1} =\sqrt{\theta_1} \widehat\vU_{k+1}, \qquad \vV_k = \theta_2^{-1} \widehat\vV_k, \qquad \vB_k = \frac{\theta_2}{\sqrt{\theta_1}} \widehat\vB_k. \]
This gives the genGK iterations at the new optimization point that satisfies $\vA \approx \widehat\vU_{k+1} \widehat\vB_{k} \widehat\vV_k^{\top}= \vU_{k+1} \vB_{k} \vV_k^{\top}$ with the new orthogonality relations 
\[\vU_{k+1}^{\top} (\theta_1\vI_m)^{-1}  \vU_{k+1} = \Id_{k+1} \qquad\vV^{\top}_{k} (\theta_2^2 \vQ_0) \vV_{k} = \Id_{k}.\] 
These new genGK relations can be used in conjunction with the approximations developed in Section~\ref{ssec:gengk}. More specifically, the computational cost at each iteration to determine the objective function and the gradient is $\mathcal{O}(k(m+n) + k^3)$ flops. The important point is that each iteration does not require applying the forward operator, and the optimization procedure is very efficient since the (expensive) genGK step is precomputed.

\paragraph{Application to Seismic Tomography.} Consider the seismic tomography problem (Section~\ref{ssec:seismic}) with the same settings except we take $\vA \in \R^{1440 \times 65536}$ and the number of genGK iterations to be $150$. The value of $\theta_3$ was chosen based on the optimal value from the previous experiments and was set to $\theta_3^* = 0.90216$. Using our approach we obtained the optimal value  
$\vtheta_{*} = \lrp{1.07\times 10^{-4}, 0.49}^{\top}$. 
  Additionally, in Table \ref{tab:2-hyper}, we provide the values of the regularization parameter {computed using genHyBR for comparison~\cite{AKS_Chung_genHyBR}. Since genHyBR can only estimate the parameter $\lambda$, rather than $\theta_1$ and $\theta_2$ separately, we fixed the value of the noise variance to be $\theta_1 = 1.07\times 10^{-4}$.
 Then, we compute at each iteration the optimal regularization parameter $\lambda_{\rm opt}$ and denoted ``Optimal'', which minimizes the 2-norm of the error between the reconstruction and the truth. We also consider the parameter selection techniques using the weighted generalized cross-validation approach (WGCV) and the discrepancy principle (DP). The details are given in~\cite{AKS_Chung_genHyBR}. }

\begin{table}[h!] \centering 
\begin{tabular}{ c|c|c|c|c} 
 & Our Approach  &  ``Optimal" & WGCV & DP \\ 
 \hline
 $\theta_2$ or $1/\lambda$ & $0.49$& $0.46$ & $0.09$ & $0.27$ \\ 
 \hline
 Rel. Err. & $2.48\%$ & $2.44\%$ & $4.59\%$ & $3.41\%$ 
\\ \hline
Iterations (MAP estimate) & $150$ & $43$ &  $58$ & $18$
\end{tabular}
\caption{Comparison of the regularization parameter $\lambda$ or $1/{\theta_2}$ after optimizing over a two-dimensional hyperparameter space.}
\label{tab:2-hyper}
\end{table}
Table~\ref{tab:2-hyper} lists the value of $\theta_2$ (which is also the inverse regularization parameter) obtained using the different techniques. Our approach which optimizes over $\theta_1$ and $\theta_2$ has a comparable relative error to the ``Optimal'' approach which assumes knowledge of $\theta_1$ and the true solution. Our approach also does better than WGCV and DP approaches in terms of relative error. In summary, the proposed approach is more general and compares favorably against existing special-purpose parameter selection techniques.

\subsection{Atmospheric tomography}
In this last numerical experiment, we illustrate our methodology on an atmospheric inverse modeling example \cite{chung2022hybrid}, where the data vector $\vd$ represents noisy satellite observations of a hypothetical atmospheric trace gas. The matrix $\vA$ represents a forward atmospheric model that will transport this gas based on estimated atmospheric winds and simulate satellite observations. Specifically, $\vd$ is a vector of samples at the locations and times of CO$_2$ observations from NASA's Orbiting Carbon Observatory 2 (OCO-2) satellite during July through mid-August 2015, and $\vA$ is generated from atmospheric model runs conducted as part of NOAA's CarbonTracker-Lagrange Project \cite{miller2020,noaa2023}. 
\begin{figure}[!ht]
    \begin{center}
    \subfloat[\centering True Emissions]
    {{\includegraphics[width=6cm]{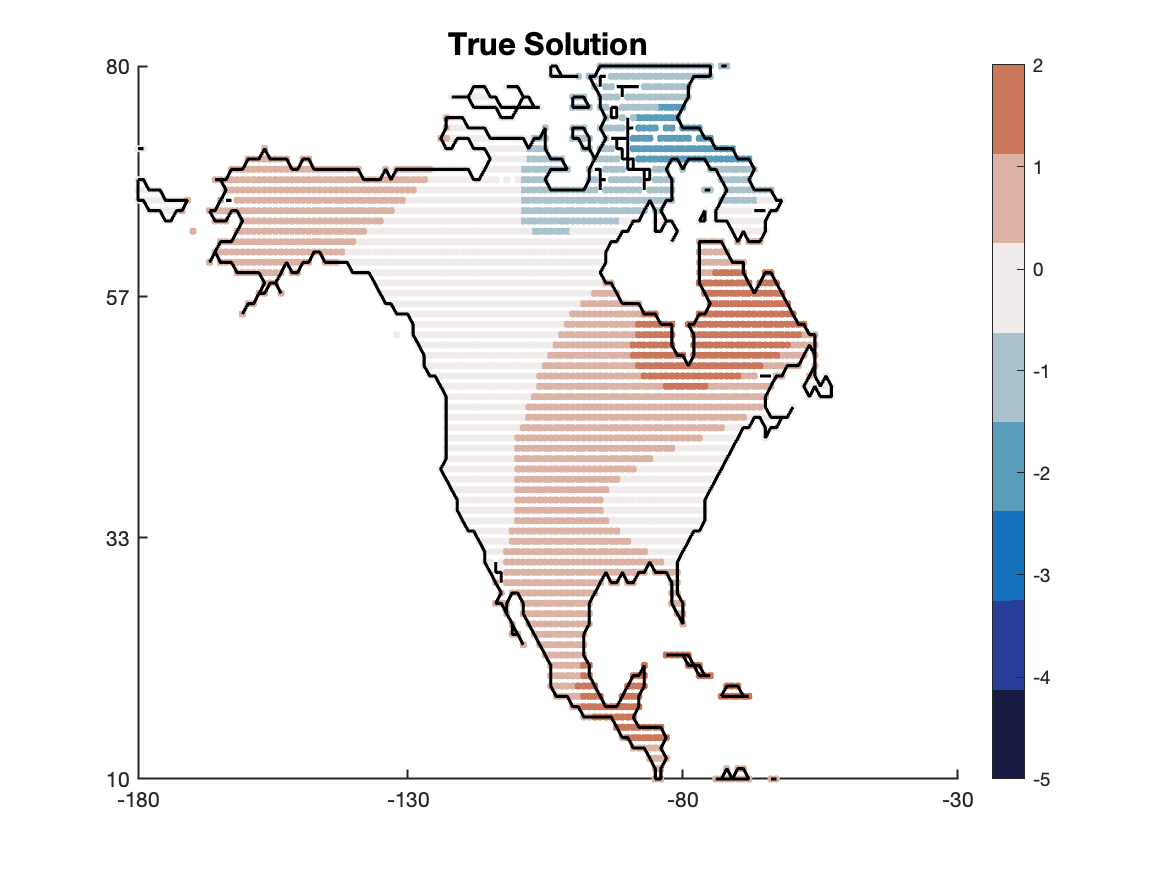} }}
    \qquad
    \subfloat[\centering Reconstruction]
    {{\includegraphics[width=6cm]{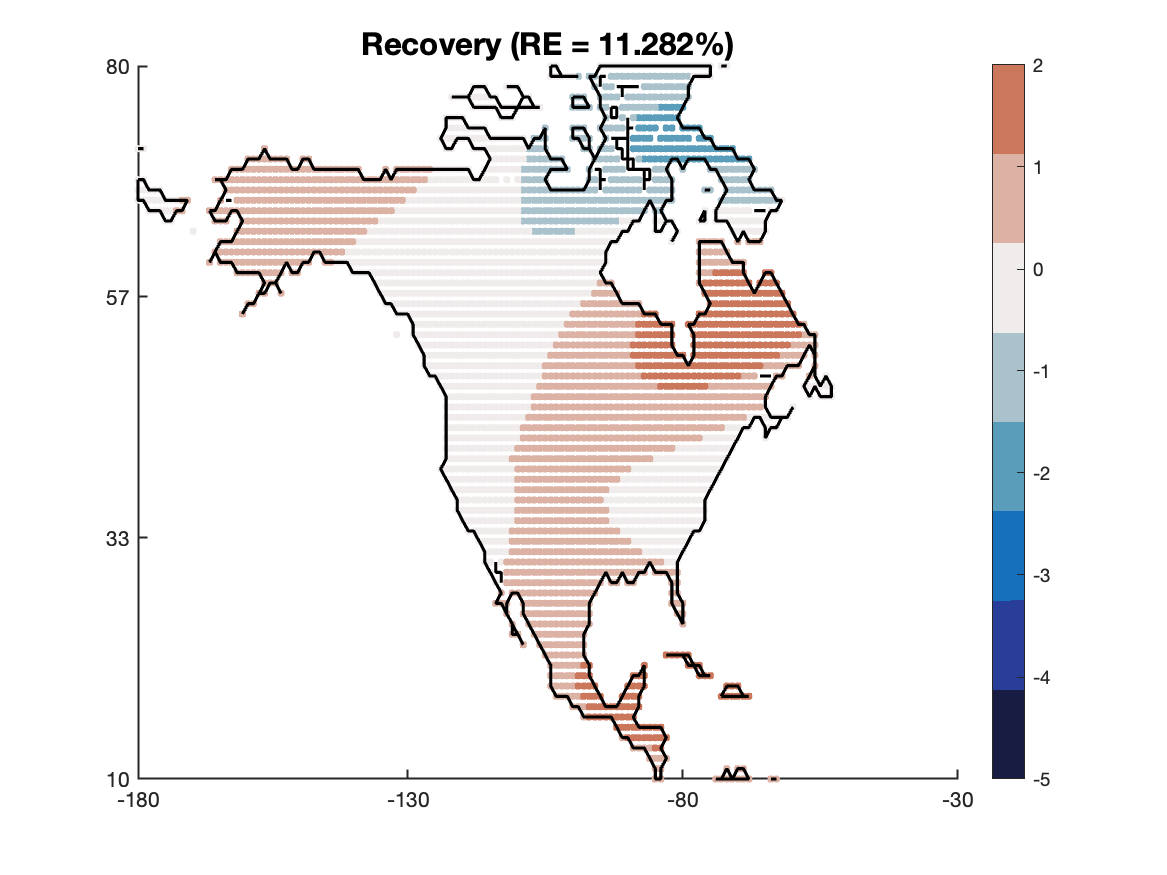} }}%
    \end{center}
    \caption{Atmospheric tomography: (left) True synthetic emissions pattern, and (right) reconstruction with estimated hyperparameters. }
    \label{fig:2d_atmos}
\end{figure}

We generate a synthetic atmospheric transport problem where 
$\vA \in \R^{98880 \times 3222}$ and $\vd \in \R^{98880}$, and the goal is to reconstruct the unknown set of states or trace gas fluxes across North America, where the spatial resolution is $1^{\circ} \times 1^{\circ}.$ Although the unknown parameters are over land, to exploit the advantages of the fast Fourier transform, we treat the unknowns on a regular grid and then incorporate a masking operator in the definition of $\vA$. More specifically, the ``unknown" vector $\vs$ is provided in Figure \ref{fig:2d_atmos}(a) and is an image generated using a truncated Karhunen-Lo\'eve expansion generated with a Mat\'{e}rn kernel with parameters $\nu = 2.5$ and $\ell = 0.05$ followed by a land mask.  The observed data $\vd$ were obtained by applying the atmospheric forward model $\vA$ and corrupting it with $2\%$ noise to simulate measurement noise. For additional details, we refer the reader to \cite{miller2020,cho2022,chung2022hybrid}.

For the hyperparameter estimation and reconstruction, we take $\vQ$ defined by a Mat\'ern kernel with $\nu=3/2$. For the hyperprior, we used a noninformative prior (P1), and for genGK, we used $k=250$ iterations. For the initial guess, we used an estimate of the noise variance $\theta_1$ and the length scale $\theta_3 = 0.075$ (the length scale is based on a domain of length $1$; we used a random guess for $\theta_2$. The optimizer took $19$ iterations and $94$ number of function evaluations.  The optimal value of $\vtheta$ was found to be 
\[\vtheta^* = \lrp{8.51\times 10^{-5}, 1.068, 0.0135}.\]
The reconstruction is provided in Figure~\ref{fig:2d_atmos}(b), where the resulting relative reconstruction error norm was approximately $11.2\%$. The optimization procedure took about $54$ minutes in wall clock time.

\section{Conclusions}
\label{sec:conclusions}
This paper describes an efficient approach for hyperparameter estimation that combines an empirical Bayes approach with the generalized Golub-Kahan bidiagonalization process for large-scale linear inverse problems. We consider the marginal posterior distribution and derive efficient algorithms for computing the MAP estimate of the marginalized posterior.  We derive an approximation to the objective function and its gradient, where the approximation is computed using information from the genGK bidiagonalization, and several results quantifying the accuracy of the approximations are provided. We also have a method to estimate and track the error in the approximations. We demonstrated the performance of our approach on model problems from inverse heat equation (1D), seismic and atmospheric tomography (2D). While our approach is fairly general, in numerical experiments we only considered the Mat\'ern covariance models with a small number of hyperparameters. Similarly, while we only considered the empirical Bayes method here, the approximations that we derive are useful in other formulations such as fully Bayesian and variational Bayes.

In future work, we will consider more robust methods for estimating the hyperparameters that do not rely solely on low-rank approximations of $\vA$. Another aspect worth exploring is whether we can improve the bound for the quadratic term. We will also consider extensions to more realistic time-dependent inverse problems, such as from atmospheric and dynamic tomography problems, where low-rank structures or sparsity patterns in the temporal direction may be exploited.  Such problems come with additional computational challenges, including more hyperparameters to estimate and larger numbers of unknowns (e.g., corresponding to finer spatiotemporal discretizations).

\paragraph{Acknowledgements.}
This work was partially supported by the National Science Foundation ATD program under grants DMS-2026841/2341843, 2026830, 2026835, and 1845406. Any opinions, findings, and conclusions or recommendations expressed in this material are those of the author(s) and do not necessarily reflect the views of the
National Science Foundation. 

We acknowledge the computing resources provided by North Carolina State University High-Performance Computing Services Core Facility (RRID:SCR\_022168). We also thank Andrew Petersen (aapeters@ncsu.edu) for his assistance.

\section*{Declarations} 
The authors have no relevant financial or non-financial interests to disclose.

\appendix

\section{Details of a derivation}\label{sec:derivations}

We show how to derive~\eqref{eqn:term1simp}. Using the Sherman--Morrison--Woodbury (SMW) formula, we have $\tilde{\vZ}^{-1}_{k} = (\vU_{k+1} \vB_{k} \vB^{\top}_{k} \vU^{\top}_{k+1} + \vR)^{-1} = \vR^{-1} - \vR^{-1} \vU_{k+1} \vB_{k} \left(\Id_{k} + \vB^{\top}_{k} \vB_{k} \right)^{-1} \vB^{\top}_{k} \vU^{\top}_{k+1} \vR^{-1}.$ 

Plugging in the expression for $\tilde{\vZ}^{-1}_k$ and using the linearity of trace
\begin{equation*}
    \begin{aligned}
    \inn{\tilde{\vZ}^{-1}_{k}}{ \widetilde{\partial_{\theta_i} \vZ}_{k}}_{F} &= \trace \left( \tilde{\vZ}^{-1}_k \widetilde{ \partial_{\theta_i} \vZ}_{k} \right) \\
        &= \trace \left( \left[ \vR^{-1} - \vR^{-1} \vU_{k+1} \vB_{k} \left(\Id_{k} + \vT_{k} \right)^{-1} \vB^{\top}_{k} \vU^{\top}_{k+1} \vR^{-1} \right] \left[ \widetilde{\vA}_k {\partial_{\theta_i} \vQ} \widetilde{\vA}^{\top}_k + {\partial_{\theta_i} \vR} \right] \right) \\
        &= \trace\left( \vR^{-1} \widetilde{\vA}_k \partial_{\theta_i} \vQ \widetilde{\vA}^{\top}_k \right) + \trace \left( \vR^{-1} \partial_{\theta_i}\vR \right) \\ 
        & \hspace{.45cm} - \trace \left( \vR^{-1} \vU_{k+1} \vB_{k} \left(\Id_{k} + \vT_{k} \right)^{-1} \vB^{\top}_{k} \vU^{\top}_{k+1} \vR^{-1} \widetilde{\vA}_k \partial_{\theta_i} \vQ  \widetilde{\vA}^{\top}_k \right) \\
        & \hspace{.45cm} - \trace \left( \vR^{-1} \vU_{k+1} \vB_{k} \left(\Id_{k} + \vT_{k} \right)^{-1} \vB^{\top}_{k} \vU^{\top}_{k+1} \vR^{-1} \frac{\partial \vR(\bar{\vtheta})}{\partial \theta_{i}} \right) .
        \end{aligned}
\end{equation*}
We plug in the expansion for $\widetilde{\vA}_k = \vU_{k+1} \vB_{k} \vV^{\top}_{k}$, use the cyclic property of the trace, and identify the matrices $\vPsi_i^Q$ and $\vPsi_i^R$ to get 
\begin{equation*}
  \begin{aligned}
 \inn{\tilde{\vZ}^{-1}_{k}}{ \widetilde{\partial_{\theta_i} \vZ}_{k}}_{F} &= \trace\left(  \vT_{k} \vPsi_i^Q   \right) + \trace \left( \vR^{-1}  \partial_{\theta_i}\vR \right) \\ 
        & \hspace{.45cm} - \trace \left( \left(\Id_{k} + \vT_{k} \right)^{-1} \vT_{k} \vPsi_{i}^Q \vT_{k} \right)  - \trace \left( \left(\Id_{k} + \vT_{k} \right)^{-1} \vB^{\top}_{k} \vPsi_i^R \vB_{k} \right). 
    \end{aligned}
\end{equation*}
In deriving these expressions we have also used the orthogonality relations $\vU^{\top}_{k+1} \vR^{-1} \vU_{k+1} = \Id_{k+1}.$ Finally, we can use $\vI - (\vI_k + \vT_k)^{-1} \vT_k = (\vI_k + \vT_k)^{-1}$ to obtain the simplified expression~\eqref{eqn:term1simp}.

\section{Error analysis in the objective function}
\subsection{Proof of Proposition~\ref{prop::svd_bound}} \label{ssec:svd}
Consider the absolute error
\begin{align}\label{eqn:errorgsvd}
    \bar{\calF} - \widehat{\calF}_{k} = \frac{1}{2} T_1 + \frac{1}{2} T_2,
\end{align}
where $T_1 \equiv |\logdet(\vZ) - \logdet({\vZ}_{k})|$ and $T_2 \equiv \| \vA \vmu - \vd \|^{2}_{\vZ^{-1}} - \| \vA \vmu - \vd \|^{2}_{{\vZ}^{-1}_{k}}.$ We bound each of these terms separately. 

For $T_1$, first note
\[T_1 = |\logdet(\vZ) - \logdet({\vZ}_{k})| = | \logdet(\vI_m + \widehat\vA\widehat\vA^\top ) -\logdet(\vI_m + \widehat\vA_k\widehat\vA_k^\top ) .  \]
Next notice that $\widehat\vA\widehat\vA^\top \succeq \widehat\vA_k\widehat\vA_k^\top $; apply both parts of~\cite[Lemma 9]{alexanderian2018efficient} to get 
\[ T_1 = \logdet(\vI_m + \widehat\vA\widehat\vA^\top -  ) -\logdet(\vI_m + \widehat\vA_k\widehat\vA_k^\top ) \leq \logdet(\vI_m +  \widehat\vA\widehat\vA^\top -\widehat\vA_k\widehat\vA_k^\top ) .\]
Plug in the SVD of $\widehat\vA$ and simplify to obtain 
$T_1 \leq \sum_{j > k} \log(1 +\sigma_j^2(\widehat\vA))$. 

For $T_2$, let $\vr =\vR^{-1/2}(\vA \vmu - \vd)$ so that 
\[ \begin{aligned}T_2 = & \>  \| \vA \vmu - \vd \|^{2}_{\vZ^{-1}} - \| \vA \vmu - \vd \|^{2}_{{\vZ}^{-1}_{k}}  \\
= & \> \vr^\top ((\vI_m + \widehat\vA\widehat\vA)^{-1} - (\vI_m +\widehat\vA_k\widehat\vA_k^\top)^{-1} )\vr. \end{aligned}\] 
Applying Cauchy-Schwartz inequality followed by submultiplicativity, we get $$T_2 \leq \beta_1^2 \|(\vI_m + \widehat\vA\widehat\vA)^{-1} - (\vI_m+\widehat\vA_k\widehat\vA_k^\top)^{-1}\|_2. $$
Apply~\cite[Lemma X.1.4]{Bhatia_Matrix_Analysis} and simplify to get $T_2 \leq \beta_1^2 \sigma_{k+1}^2(\widehat\vA)/(1+\sigma_{k+1}^2(\widehat\vA))$.

The proof is complete by plugging in the bounds for $T_1$ and $T_2$ into~\eqref{eqn:errorgsvd}.
\subsection{Proof of Proposition~\ref{prop:genGK_bound}} \label{ssec:proofgengk}
As in the proof of Proposition~\ref{prop::svd_bound}, write 
\begin{align}
    \bar{\calF} - \tilde{\calF}_{k} = \frac{1}{2} T_1 + \frac{1}{2} T_2,
\end{align}
where $T_1 \equiv \logdet(\vZ) - \logdet(\tilde{\vZ}_{k})$ and $T_2 \equiv \| \vA \vmu - \vd \|^{2}_{\vZ^{-1}} - \| \vA \vmu - \vd \|^{2}_{\tilde{\vZ}^{-1}_{k}}.$ We bound each of these terms separately. 

Let us first consider $T_1.$ Set $\widehat{\vA} = \vR^{-1/2} \vA \vQ^{1/2}.$ By Sylvester's determinant identity,
\begin{equation}
    \logdet(\vZ) = \logdet(\vR) + \logdet(\Id_{m} + \widehat{\vA} \widehat{\vA}^{\top}).
\end{equation}
We can write $\tilde{\vA}_{k} \vQ \tilde{\vA}^{\top}_{k}$ in terms of the projector $\vPi_{\vV_k}$ as 
\[
    \tilde{\vA}_{k} \vQ \tilde{\vA}^{\top}_{k} = \vA\vP_{\vQ}^\top \vQ (\vA\vP_{\vQ}^\top)^\top     =    \vA \vQ^{1/2} \vPi_{\vV_{k}}  (\vA \vQ^{1/2})^{\top},
\]
we can assert that $\tilde{\vZ}_{k} = \vR^{1/2} \left[ \widehat{\vA} \vPi_{\vV_{k}} \widehat{\vA}^{\top} + \Id_{m} \right] \vR^{1/2}.$ Hence,
\begin{equation}
    \logdet(\tilde{\vZ}_{k}) = \logdet(\vR) + \logdet(\Id_{m} + \widehat{\vA} \vPi_{\vV_{k}} \hat{\vA}^{\top})
\end{equation}
Set $\vM = \widehat{\vA} \widehat{\vA}^{\top}$ and $\vN = \widehat{\vA} \vPi_{\vV_{k}} \widehat{\vA}^{\top}.$ Since the orthogonal projector $\vPi_{\vV_{k}} \preceq \Id_n$ then $\vN \preceq \vM$ and, by~\cite[Lemma 9]{alexanderian2018efficient}
\begin{align*}
0 \leq T_1 &\leq \logdet(\Id_{m} + \vM - \vN) \\
&\leq \trace(\vM - \vN)  =  \trace(\vH_{\vQ}) - \trace(\widehat{\vH}^{(k)}_{\vQ}) \equiv \xi_{k},
\end{align*}
where in the second step we have used $\logdet(\vX) \leq \trace(\vX - \Id_{m})$ for any positive semidefinite matrix $\vX \in \calM_{m}(\R)$ and in the last step we have used the cyclic and linearity properties of the trace operator.

Next, consider $T_2.$ Set $\vr = \vR^{-1/2}(\vA \vmu - \vd)$ and recall that $\| \vr \|_{2} = \beta_1$ in (\ref{genGK_relations_transformed}). We can rewrite 
\[\| \vA \vmu - \vd \|^{2}_{\vZ^{-1}} = \left\langle (\Id_m + \widehat{\vA} \widehat{\vA}^{\top})^{-1} \vr  , \vr\right\rangle_{2} = \left\langle f(\widehat{\vA}\widehat{\vA}^\top) \vr  , \vr\right\rangle_{2}, \]
where $f : \R_{+} \rightarrow \R : x \mapsto 1 - (1 + x)^{-1}$. 
Then using the Cauchy-Schwartz inequality 
\begin{align*}
    T_2 =& \> \left\langle f( \widehat{\vA} \widehat{\vA}^{\top}) - f( \widehat{\vA}\vPi_{\vV_{k}}  \widehat{\vA}^{\top}) )\vr, \vr \right\rangle_{2} \\
     \leq & \> \norm{ f( \widehat{\vA} \widehat{\vA}^{\top}) - f( \widehat{\vA}\vPi_{\vV_{k}}  \widehat{\vA}^{\top}) ) } \beta^{2}_1.
\end{align*}
Since $f(x)$ is operator monotone (see, for example,~\cite[Exercise V.1.10 (ii)]{Bhatia_Matrix_Analysis}) and $f(0) = 0.$ Hence, by~\cite[Theorem X.1.1]{Bhatia_Matrix_Analysis},
\[\begin{aligned}
    T_2 &\leq \norm{f(\widehat{\vA} \widehat{\vA}^{\top}) - f(\widehat{\vA} \vPi_{\vV_{k}}\widehat{\vA}^{\top}) }_{2} \beta^{2}_1 \\
    &\leq \beta^{2}_1  f\left(\norm{\widehat{\vA} \left[ \Id_{n} -  \vPi_{\vV_{k}} \right] \widehat{\vA}^{\top}}_{2}\right) 
    =  \frac{\beta^{2}_1  \norm{\widehat{\vA} \left[ \Id_{n} -  \vPi_{\vV_{k}} \right] \widehat{\vA}^{\top}}_{2}}{1 + \norm{\widehat{\vA} \left[ \Id_{n} -  \vPi_{\vV_{k}} \right] \widehat{\vA}^{\top}}_{2}}  .
\end{aligned}\]
Since ${\Id_{n} -  \vPi_{\vV_{k}}}$ is an orthogonal projector, it is idempotent and its $2$-norm is at most $1$; using these two facts
\begin{align*}
    \norm{\widehat{\vA} \left[ \Id_{n} -  \vPi_{\vV_{k}} \right] \widehat{\vA}^{\top}}_{2} &= \norm{ \left[ \Id_{n} -  \vPi_{\vV_{k}} \right] \widehat{\vA}^{\top} \widehat{\vA} \left[ \Id_{n} -  \vPi_{\vV_{k}} \right]}_{2} \\
    &\leq \norm{ \left[ \Id_{n} -  \vPi_{\vV_{k}} \right] \vH_{\vQ}}_{2} \leq |\trace( [\Id_{n} -  \vPi_{\vV_{k}} ] \vH_{\vQ})| =\xi_k.
\end{align*}
Hence, $T_2 \leq \frac{\beta^{2}_{1} \xi_{k}}{1 + \xi_{k}}.$ Combine the two bounds as $|\overline{\calF} - \tilde{\calF}_{k}| = |T_1 + T_2| \leq \xi_{k} + \frac{\beta^{2}_{1} \xi_{k}}{1 + \xi_{k}}.$ This completes the proof. 

\section{Error analysis of Trace estimator}\label{ssec:trace}
 To recap the setup: Let $\vH$  and $\vQ$ be two symmetric positive semidefinite $n\times n$ matrices, with $\vQ$ positive definite. Let $\vomega_k \in \mathbb{R}^n$ for $1 \leq k \leq N$ be random vectors with independent entries that have zero mean and $\max_j \| (\vomega_k)_j\|_{\psi_2} \leq K_\psi$. Define the trace estimator 
\[ \widehat{\trace} (\vH\vQ) \equiv \frac{1}{N} \sum_{k=1}^N \vomega_j^\top \vH\vQ \vomega_j. \]

\begin{proof}[Proof of Proposition~\ref{prop:trace}] 
By the Hanson-Wright inequality~\cite[Theorem 6.2.1]{vershynin2018high}, for a matrix $\vK $ and a vector $\vx$ with independent mean zero subgaussian entries with subgaussian norm, at most $K_\psi$
\begin{equation}\label{eqn:hansonwright} \mathbb{P}\left\{ \vx^\top\vK \vx - \mathbb{E}(\vx^\top \vK \vx)| \geq t \right\} \leq 2\exp\left( - C_{\rm HW} N \min\left\{ \frac{t^2}{K_\psi^4\|\vK\|_F^2}, \frac{t}{K_\psi^2\|\vK\|_2}  \right\} \right) . \end{equation}
Apply this result with $\vK = \frac1N \diag( \vH\vQ, \dots,  \vH\vQ)$ and $\vx = \begin{bmatrix} \vw_1^\top & \dots  & \vw_N^\top\end{bmatrix}^\top $. Note that $ \vx^\top \vK\vx = \widehat{\trace}(\vH\vQ)  $  and $$ \mathbb{E}(\vx^\top \vK \vx) = \trace(\vK) = \trace(\vH\vQ) = \trace(\vH_{\vQ})$$ by the cyclic property of trace,  $\|\vK\|_F =  \|\vH\vQ\|_F/\sqrt{N}$, and $\|\vK\|_2 = \|\vH\vQ\|_2/N$. Plug these identities into~\eqref{eqn:hansonwright} to obtain~\eqref{eqn:probtrace}. 

For the second result, the tail bound in~\eqref{eqn:probtrace} can be bounded as 
\[ 2\exp\left( - C_{\rm HW} N \min\left\{ \frac{t^2}{K^4\|\vK\|_F^2}, \frac{t}{K_\psi^2\|\vK\|_2}  \right\} \right)  \leq 2\exp\left( - C_{\rm HW} N \frac{t^2}{ t \|\vH\vQ\|_2 +  K_\psi^2 \|\vH\vQ\|_F^2}   \right)  .\]
where we have used the inequality $\min\{x,y\} \geq xy/(x+y)$ for $x, y \geq 0$. 
Set $t = \epsilon \trace(\vH\vQ)$, the upper bound to $\delta $ and solve for $N$. Finally, note that $\trace(\vH\vQ) = \trace(\vH_{\vQ})$ is nonnegative.
\end{proof}

\bibliography{references}
\bibliographystyle{abbrv}

\end{document}